\documentclass[12pt,english]{article}
\usepackage[T1]{fontenc}
\usepackage[latin9]{inputenc}
\usepackage{geometry}
\geometry{verbose,lmargin=3.5cm,rmargin=3.5cm}
\usepackage{color}
\usepackage{amsmath}
\usepackage{amsthm}
\usepackage{amssymb}

\makeatletter
\numberwithin{equation}{section}
\theoremstyle{plain}
\newtheorem{thm}{\protect\theoremname}[section]
\theoremstyle{remark}
\newtheorem{rem}[thm]{\protect\remarkname}
\theoremstyle{plain}
\newtheorem{cor}[thm]{\protect\corollaryname}
\theoremstyle{definition}
\newtheorem{defn}[thm]{\protect\definitionname}
\theoremstyle{plain}
\newtheorem{prop}[thm]{\protect\propositionname}
\theoremstyle{plain}
\newtheorem{lem}[thm]{\protect\lemmaname}

\usepackage{graphicx}
\usepackage{float}
\usepackage{comment}

\usepackage{hyperref}
\hypersetup{
     colorlinks   = true,
     citecolor    = blue,
     linkcolor    = blue
}

\date{}

\makeatother

\usepackage{babel}
\providecommand{\corollaryname}{Corollary}
\providecommand{\definitionname}{Definition}
\providecommand{\lemmaname}{Lemma}
\providecommand{\propositionname}{Proposition}
\providecommand{\remarkname}{Remark}
\providecommand{\theoremname}{Theorem}

\begin{document}
\title{On the Lack of Gaussian Tail for Rough Line Integrals along Fractional
Brownian Paths}
\author{H. Boedihardjo\thanks{Department of Statistics, University of Warwick, Coventry, CV4 7AL,
United Kingdom. Email: horatio.boedihardjo@warwick.ac.uk. \textcolor{black}{HB
gratefully acknowledges the EPSRC support EP/W00707X/1 and ICMS's
generous hospitality in hosting our visit in summer 2022.}}$\ $ and X. Geng\thanks{School of Mathematics and Statistics, University of Melbourne, Parkville
VIC 3010, Australia. Email: xi.geng@unimelb.edu.au. \textcolor{black}{XG
gratefully acknowledges the ARC support DE210101352 and ICMS's generous
hospitality in hosting our visit in summer 2022.}}}
\maketitle
\begin{abstract}
We show that the tail probability of the rough line integral $\int_{0}^{1}\phi(X_{t})dY_{t}$,
where $(X,Y)$ is a 2D fractional Brownian motion with Hurst parameter
$H\in(1/4,1/2)$ and $\phi$ is a $C_{b}^{\infty}$-function satisfying
a mild non-degeneracy condition on its derivative, cannot decay faster
than a $\gamma$-Weibull tail with any exponent $\gamma>2H+1$. In
particular, this produces a simple class of examples of differential
equations driven by fBM, whose solutions fail to have Gaussian tail
even though the underlying vector fields are assumed to be of class
$C_{b}^{\infty}$. This also demonstrates that the well-known upper
tail estimate proved by Cass-Litterer-Lyons in 2013 is essentially
sharp.
\end{abstract}

\section{Motivation and main result}

Since the development of It\^o's stochastic calculus in the 1940s,
quantitative properties of stochastic differential equations (SDEs)
have been playing a central role in stochastic analysis for many decades.
The present article is concerned with one particular aspect: tail
probabilities of solutions. Consider a multidimensional SDE (written
in Stratonovich form)
\begin{equation}
{\color{black}dU_{t}=\sum_{\alpha=1}^{d}V_{\alpha}(U_{t})\circ dB_{t}^{\alpha},\ \ \ U_{0}=x\in\mathbb{R}^{N}}\label{eq:SDE}
\end{equation}
driven by a $d$-dimensional Brownian motion, where the vector fields
$V_{1},\cdots,V_{d}:\mathbb{R}^{N}\rightarrow\mathbb{R}^{N}$ are
assumed to be of class $C_{b}^{\infty}$ (bounded, smooth with uniformly
bounded derivatives of all orders). By using martingale methods, it
is classical that the solution $U_{t}$ has Gaussian tail for each
fixed time $t$. Here we say that a random variable $Z$ \textit{has}
\textit{Gaussian tail}, if there exist positive constants $C_{1},C_{2}$
such that 
\[
\mathbb{P}(|Z|>\lambda)\leqslant C_{1}e^{-C_{2}\lambda^{2}}\ \ \ \forall\lambda>0.
\]
As an application, the existence of Gaussian tail for the solution
$X_{t}$ can be used to obtain Gaussian-type upper bounds on the fundamental
solution of the heat equation associated with the generator of (\ref{eq:SDE}).
We refer the reader to the seminal works \cite{KS85,KS87} for a quantitative
study of these and other related questions.

Our main interest lies in understanding the tail behaviour of solutions
beyond the diffusion case. A typical extension of SDEs to a non-semimartingale
setting, where It\^o's calculus breaks down in an essential way,
is to consider the situation where $B_{t}$ is a Gaussian process,
or even more specifically, a \textit{fractional Brownian motion} with
Hurst parameter $H\neq1/2$. When $H>1/2$, by using Young's integration
theory it is possible to give a natural meaning of solutions to the
SDE (\ref{eq:SDE}) (cf. \cite{Lyo94}). When $H<1/2$, the SDE (\ref{eq:SDE})
can no longer be defined in the classical sense of Young. A solution
theory, commonly known as the \textit{rough path theory}, was developed
by Lyons \cite{Lyo98} in 1998 to deal with this more singular regime.
If $H\in(1/4,1/2)$, one can establish the well-posedness of the SDE
(\ref{eq:SDE}) within the framework of rough paths (cf. \cite{CQ02}).

Now let us consider a general SDE driven by fBM with Hurst parameter
$H\in(1/4,1)$. Since the driving process itself is Gaussian (thus
having Gaussian tail), under the $C_{b}^{\infty}$-assumption on the
vector fields it is not entirely unreasonable to believe that the
solution should also have Gaussian tail. This turns out to be true
in the case of $H>1/2$, which was proved by Baudoin-Ouyang-Tindel
\cite{BOT14} using Gaussian concentration techniques.

However, the situation becomes drastically subtler in the rough regime
of $H<1/2$. Under the $C_{b}^{\infty}$-assumption on the vector
fields, it was a remarkable theorem of Cass-Litterer-Lyons \cite{CLL13}
in 2013 that the following tail estimate of $U_{t}$ holds true. For
any $\gamma<2H+1,$ there exist positive constants $C_{1},C_{2}$
depending only on the vector fields, $H$ and $\gamma$, such that
\begin{equation}
\mathbb{P}(|U_{t}-x|>\lambda)\leqslant C_{1}e^{-C_{2}\lambda^{\gamma}}\ \ \ \forall\lambda>0.\label{eq:CLLIntro}
\end{equation}
As pointed out in \cite{BNOT16}, a more careful application of Cameron-Martin
embedding allows one to achieve $\gamma=2H+1$ in the estimate (\ref{eq:CLLIntro}).
This result, which is the best existing one, is a Weibull-type sub-Gaussian
estimate since $1+2H<2$.

While there are no available lower bounds in this rough regime, in
view of the $H\geqslant1/2$ case it is tempting to ask if the Cass-Litterer-Lyons
estimate (\ref{eq:CLLIntro}) (referred to as the CLL estimate from
now on) could be further improved to demonstrate that $U_{t}$ does
have Gaussian tail. The main goal of this article is to provide a
negative answer of \textit{no} to this question. Our main result is
summarised as follows.
\begin{thm}
\label{thm:main}Let $(X_{t},Y_{t})_{0\leqslant t\leqslant1}$ be
a two-dimensional fractional Brownian motion with Hurst parameter
$H\in(1/4,1/2)$. Let $\phi:\mathbb{R}\rightarrow\mathbb{R}$ be a
$C_{b}^{\infty}$-function such that 
\begin{equation}
\sup_{r>0}\inf_{x\in\mathbb{R}}\sup_{y\in[x,x+r]}|\phi'(y)|>0.\label{eq:NonDegPhi}
\end{equation}
Then for any $\gamma>2H+1$, there exist positive constants $C_{1},C_{2}$
depending only on $H,\phi$ and $\gamma$, such that 
\begin{equation}
\mathbb{P}\big(\big|\int_{0}^{1}\phi(X_{t})dY_{t}\big|>\lambda\big)\geqslant C_{1}e^{-C_{2}\lambda^{\gamma}}\label{eq:MainEst}
\end{equation}
for all $\lambda>0.$
\end{thm}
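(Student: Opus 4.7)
The plan is to combine a Cameron--Martin shift of the $X$-component with a conditional Gaussian tail estimate for the integral against $Y$. The starting observation is that, since $X$ and $Y$ are independent and $Y$ is Gaussian, the rough line integral $Z := \int_0^1 \phi(X_t)\,dY_t$ is, \emph{conditional on} $X$, a centered Gaussian random variable in $Y$: inspecting the Riemann--Lyons sums
\[ \sum_i \bigl[\phi(X_{t_i})(Y_{t_{i+1}} - Y_{t_i}) + \phi'(X_{t_i})\int_{t_i}^{t_{i+1}}(X_u - X_{t_i})\,dY_u\bigr], \]
each term (including the cross L\'evy area) is a Wiener integral in $Y$ once $X$ is frozen. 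Denote $V(X) := \mathrm{Var}(Z\mid X)$. The strategy is then to locate a Cameron--Martin direction $\tilde x$ for the law of $X$ for which $V(X+\tilde x) \gtrsim N^{1-2H}$ holds on a set of good probability while $\|\tilde x\|^2 \asymp N^{2H+1}$, and then to apply the standard Cameron--Martin lower bound.

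For the shift I would take a sinusoidal-type perturbation of frequency $N$ and bounded amplitude on $[0,1]$. Each of the $N$ bumps, of width $1/N$ and height of order unity, contributes $\asymp N^{2H}$ to the squared Cameron--Martin norm of fBM (with Hurst $H$), so $\|\tilde x\|^2 \asymp N^{2H+1}$. On the positive-probability event $\{\|X\|_\infty \leq 1\}$, the composed function $\phi(X+\tilde x)$ genuinely oscillates at frequency $\sim N$ with amplitude bounded below by a fixed constant: within each sub-period of length $1/N$ the path $X+\tilde x$ sweeps a window of length $\geq r_0$, and by \eqref{eq:NonDegPhi} some subinterval of that window carries $|\phi'| \geq c_0 > 0$, so $\phi(X+\tilde x)$ changes by at least a fixed positive amount on each period.

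The key estimate is then the variance lower bound $V(X+\tilde x) \gtrsim N^{1-2H}$. This follows from the standard Wiener isometry identifying $\mathrm{Var}(\int f\,dY)$ with the squared fractional Sobolev norm of $f$ of index $\tfrac12-H$ (for fBM with $H<1/2$); an oscillatory $f$ of unit amplitude and frequency $N$ has this norm squared $\asymp N^{1-2H}$. The conditional Gaussian tail then yields $P(|Z|>\lambda \mid X) \gtrsim e^{-c\lambda^2/N^{1-2H}}$ on $\{\|X\|_\infty \leq 1\}$. Plugging this into the usual Cameron--Martin lower bound, after truncating the density correction $e^{-\delta_{\tilde x}(X) - \|\tilde x\|^2/2}$ on $\{|\delta_{\tilde x}(X)| \lesssim \|\tilde x\|\sqrt{\lambda^2/N^{1-2H}}\}$ (a correction factor of the same exponential order), we obtain
\[ P(|Z|>\lambda) \gtrsim \exp\bigl(-C N^{2H+1} - c\,\lambda^2/N^{1-2H}\bigr). \]
Choosing $N \sim \lambda$ balances the two exponents at $\lambda^{2H+1}$, and we conclude $P(|Z|>\lambda) \gtrsim e^{-C\lambda^{2H+1}}$, which immediately gives \eqref{eq:MainEst} for every $\gamma > 2H+1$.

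The main obstacle I anticipate is the rigorous execution of the variance lower bound $V(X+\tilde x) \gtrsim N^{1-2H}$: one has to quantitatively exploit \eqref{eq:NonDegPhi} to guarantee that on each sub-period of length $1/N$ the shifted path $X+\tilde x$ actually crosses a sub-window on which $\phi$ has slope uniformly bounded below, while simultaneously controlling the rough fluctuations of $X$ on time scales $\ll 1/N$ so that they do not destroy the oscillation inherited from $\tilde x$. The conditional Gaussianity of $Z$ and the Wiener-isometry formula for $V$ are standard, but care is required to apply the latter to $f=\phi(X+\tilde x)$, which is only H\"older of exponent $H$; the constraint $H > 1/4$ is precisely what ensures $H > \tfrac12-H$, so that $f$ is an admissible integrand against $Y$ in the Wiener sense.
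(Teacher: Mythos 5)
Your overall architecture is the same as the paper's three-step scheme: condition on $X$ so that $\int_0^1\phi(X_t)dY_t$ is centred Gaussian with variance given by a fractional Sobolev norm of $\phi(X)$ of smoothness index $1/2-H$ (the paper's Proposition \ref{prop:fSobRepCV}, which indeed needs the dyadic approximation argument and the embedding ${\cal C}_0^\gamma\subseteq{\cal H}$ for $\gamma>1/2-H$, i.e.\ exactly the $H>1/4$ constraint you flag), then push $X$ by a Cameron--Martin shift towards a highly oscillatory path, and finally balance the shift cost against the conditional Gaussian tail. Where you genuinely differ is the middle step: the paper scales one fixed Weierstrass path $h_\alpha$ of H\"older exponent $\alpha>H+1/2$ by $\lambda$, which forces a continuity estimate for the functional $J$, a localisation of $X$ in a tube of radius $\lambda^{-\varepsilon}$, and a small-ball estimate under H\"older norm; you use an $N$-dependent, bounded-amplitude, frequency-$N$ shift together with a \emph{fixed} ball for $X$, which would dispense with both the small-ball inequality and the tube localisation. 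That is a legitimate and arguably cleaner variant, and the way you exploit (\ref{eq:NonDegPhi}) (a swept window of length $\geqslant r$ plus a second-order Taylor argument) matches the paper's Lemmas \ref{lem:SecondTaylor}--\ref{lem:CompositionLemma}.

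Two steps, however, are not correct as written. First, the variance bound cannot be run on $\{\|X\|_\infty\leqslant1\}$ alone: knowing that $\phi(X+\tilde x)$ has oscillation $\geqslant c_1$ on each period of length $1/N$ does \emph{not} lower-bound the $W^{1/2-H,2}$-type double integral, since for smoothness index $1/2-H<1/2$ oscillation is not controlled by this seminorm (a logarithmic spike has oscillation $1$ and arbitrarily small seminorm). You must, as you partly anticipate, intersect with a fixed H\"older ball $\{\|X\|_{H-\delta}\leqslant K\}$ so that near the two times realising the oscillation the composed path is stable on subintervals of length comparable to $1/N$; then each period contributes $\gtrsim N^{-2H}$ and summation gives $N^{1-2H}$. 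Second, the claim $\|\tilde x_N\|_{\bar{\cal H}}^2\asymp N^{2H+1}$ is justified by adding up ``bump'' contributions, which is not valid as stated: the fBM Cameron--Martin inner product is nonlocal, so squared norms of disjointly supported bumps are not additive, and the sharp bound requires a genuine computation (e.g.\ via a spectral/harmonizable representation and restriction). As written, your balancing would yield the critical exponent $\gamma=2H+1$ itself, which the paper explicitly leaves open, so this is precisely the step to distrust. For the theorem as stated you do not need sharpness: the embedding of $\alpha$-H\"older paths, $\alpha>H+1/2$, into $\bar{\cal H}$ gives $\|\tilde x_N\|_{\bar{\cal H}}^2\lesssim N^{2\alpha}$, and rebalancing $N$ against $\lambda$ then produces the exponent $4\alpha/(1+2\alpha-2H)$, i.e.\ any $\gamma>2H+1$ --- exactly the paper's conclusion. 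With these two repairs your plan goes through and is a sound alternative to the paper's localisation-plus-small-ball route.
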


\begin{rem}
Heuristically, the condition (\ref{eq:NonDegPhi}) means that any
interval of a certain length contains at least one point at which
$\phi'$ is not small. For instance, this condition is satisfied if
$\phi$ is non-constant and periodic.
\end{rem}

To see how the rough line integral $\int_{0}^{1}\phi(X_{t})dY_{t}$
is related to an SDE, one simply observes that it is the time-one
value of the $Z_{t}$ component of the following SDE:
\begin{equation}
d\left(\begin{array}{c}
W_{t}\\
Z_{t}
\end{array}\right)=\left(\begin{array}{cc}
1 & 0\\
0 & \phi(W_{t})
\end{array}\right)\cdot\left(\begin{array}{c}
dX_{t}\\
dY_{t}
\end{array}\right).\label{eq:LineIntSDE}
\end{equation}
In particular, Theorem \ref{thm:main} provides a simple class of
examples of SDEs driven by fBM with $C_{b}^{\infty}$-vector fields,
whose solutions do \textit{not} possess Gaussian tail. 
\begin{cor}
\textcolor{black}{Let $(B^{\alpha})_{\alpha=1}^{d}$ be fractional
Brownian motion with Hurst parameter $H\in(1/4,1/2)$. For all $\gamma>1+2H$,
there are $C_{b}^{\infty}$-vector fields $(V_{\alpha})_{\alpha=1}^{d}$
in $\mathbb{R}^{N}$ such that if $(U_{t})_{t\geqslant0}$ is the
solution to 
\[
dU_{t}=\sum_{\alpha=1}^{d}V_{\alpha}(U_{t})\circ dB_{t}^{\alpha},\ \ \ U_{0}=x\in\mathbb{R}^{N},
\]
in the sense of rough path, then there exist $C_{1},C_{2}>0$, such
that
\[
\mathbb{P}\big(\big|U_{1}-x\big|>\lambda\big)\geqslant C_{1}e^{-C_{2}\lambda^{\gamma}}\qquad\forall\lambda>0.
\]
In particular, the $1+2H$ exponent in the CLL estimate (\ref{eq:CLLIntro})
\[
\mathbb{P}\big(\big|U_{1}-x\big|>\lambda\big)\leqslant C_{1}e^{-C_{2}\lambda^{1+2H}}\qquad\forall\lambda>0.
\]
 cannot be improved while still holds for all $C_{b}^{\infty}$-vector
fields.}
\end{cor}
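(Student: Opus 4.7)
The plan is to derive the corollary as a direct consequence of Theorem \ref{thm:main} via the SDE representation (\ref{eq:LineIntSDE}). First I would fix any non-constant periodic $C_b^{\infty}$-function $\phi:\mathbb{R}\to\mathbb{R}$, for instance $\phi(w)=\sin w$, which automatically verifies the non-degeneracy condition (\ref{eq:NonDegPhi}) as noted in the remark following Theorem \ref{thm:main}.

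Next I would construct the vector fields as follows. For given dimensions $d\geqslant 2$ and $N\geqslant 2$, define $V_{1},V_{2}:\mathbb{R}^{N}\to\mathbb{R}^{N}$ by $V_{1}(u)\equiv e_{1}$ and $V_{2}(u)=\phi(u^{1})\,e_{2}$, where $e_{1},e_{2}$ denote the first two standard basis vectors of $\mathbb{R}^{N}$ and $u^{1}$ is the first coordinate of $u$. Set $V_{\alpha}\equiv 0$ for $\alpha\geqslant 3$. These vector fields lie in $C_{b}^{\infty}$ since $\phi$ does. With initial datum $x=0$, the first two components of the rough-path solution $(U_{t})$ decouple in the form
\[
U_{t}^{1}=B_{t}^{1},\qquad U_{t}^{2}=\int_{0}^{t}\phi(B_{s}^{1})\,dB_{s}^{2},
\]
the second integral being interpreted in the rough sense. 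Writing $X=B^{1}$ and $Y=B^{2}$, which together form a two-dimensional fractional Brownian motion with Hurst parameter $H\in(1/4,1/2)$, the time-one value $U_{1}^{2}$ is exactly $\int_{0}^{1}\phi(X_{t})\,dY_{t}$.

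Since $|U_{1}-0|\geqslant|U_{1}^{2}|=\big|\int_{0}^{1}\phi(X_{t})\,dY_{t}\big|$, applying Theorem \ref{thm:main} immediately yields the desired Weibull lower bound for any exponent $\gamma>2H+1$, which then implies the sharpness statement for the CLL exponent. There is essentially no obstacle here: the corollary is a straightforward repackaging of Theorem \ref{thm:main}, with the only routine verification being that the constructed vector fields inherit the $C_{b}^{\infty}$-property from $\phi$. All of the genuine mathematical difficulty is concentrated in Theorem \ref{thm:main} itself, which the remainder of the paper is presumably devoted to proving.
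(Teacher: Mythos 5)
Your proposal is correct and is essentially the paper's own (largely implicit) argument: the authors obtain the corollary by observing that $\int_{0}^{1}\phi(X_{t})dY_{t}$ is the time-one value of the $Z$-component of the SDE (\ref{eq:LineIntSDE}), which is exactly your construction with $V_{1}=e_{1}$, $V_{2}=\phi(u^{1})e_{2}$, padded by zero vector fields, followed by an application of Theorem \ref{thm:main}. Your restriction to $x=0$ is immaterial, since for a periodic $\phi$ the condition (\ref{eq:NonDegPhi}) is invariant under the translation $\phi(\cdot)\mapsto\phi(x^{1}+\cdot)$, so the same bound holds for any starting point.
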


The lack-of-Gaussian tail phenomenon appears to be surprising at first
glance. Since the driving process is Gaussian, it suggests that in
a probabilistic sense the solution travels much faster than the driving
process itself despite of the $C_{b}^{\infty}$-assumption. In the
uniformly elliptic case, the intuition that ``the solution process
should behave more or less like the driving process'' is simply not
true. If one removes the $C_{b}^{\infty}$-assumption, it is of course
possible for the tail of $U_{t}$ to be as large as one wants (even
with explosion in finite time). On the other hand, if the vector fields
decay fast enough at infinity, one can make the tail of $U_{t}$ as
small as possible (as an extreme example, the solution will be uniformly
bounded if the vector fields have compact supports). It is the case
of suitably non-degenerate $C_{b}^{\infty}$-vector fields (e.g. uniformly
elliptic) that makes the lack-of-Gaussian-tail phenomenon counterintuitive.
It is not hard to construct examples of $\phi$ satisfying the condition
(\ref{eq:NonDegPhi}) of Theorem \ref{thm:main}, such that the associated
SDE (\ref{eq:LineIntSDE}) is uniformly elliptic (i.e. its coefficient
matrix is uniformly positive definite with $C_{b}^{\infty}$-inverse).

\vspace{2mm}\noindent \textbf{Organisation}. In Section 2, we recall
some basic properties of fBM. In Section 3, we develop the main ingredients
for proving Theorem \ref{thm:main}. In Section 4, we conclude with
a few further questions.

\section{\label{sec:Prelim}Basic properties of fractional Brownian motion}

In this section, we collect a minimal set of standard notions about
fractional Brownian motion that are needed for our study. The reader
is referred to \cite{FV10,Nua06} (and the references therein) for
more detailed discussions. We begin with its definition.
\begin{defn}
A one-dimensional \textit{fractional Brownian motion} (fBM) with Hurst
parameter $H\in(0,1)$ is a mean-zero Gaussian process $\{X_{t}:t\geqslant0\}$
with covariance function 
\[
R(s,t)\triangleq\mathbb{E}[X_{s}X_{t}]=\frac{1}{2}(s^{2H}+t^{2H}-|t-s|^{2H}),\ \ \ s,t\geqslant0.
\]
\end{defn}

Throughout the rest, the time horizon is always fixed to be $[0,1]$.
A crucial property of fBM (indeed, of any continuous Gaussian process)
is the notion of its Cameron-Martin space. There are two canonical
(and non-identical) ways of defining it. Let $\{X_{t}:t\in[0,1]\}$
be an fBM defined on some probability space $(\Omega,{\cal F},\mathbb{P})$.
\begin{defn}
The \textit{non-intrinsic Cameron-Martin space}, denoted as ${\cal H},$
is defined to be the Hilbert space completion of linear combinations
of indicator functions $\{{\bf 1}_{[0,t]}:t\in[0,1]\}$ with respect
to the inner product 
\[
\langle{\bf 1}_{[0,s]},{\bf 1}_{[0,t]}\rangle_{{\cal H}}\triangleq R(s,t).
\]
The \textit{intrinsic Cameron-Martin space}, denoted as $\bar{{\cal H}}$,
is the subspace of continuous paths $h:[0,1]\rightarrow\mathbb{R}$
that admit the representation 
\[
h_{t}=\mathbb{E}[ZX_{t}],\ \ \ t\in[0,1]
\]
for some $Z$ belonging to the $L^{2}$-completion of linear combinations
of $\{X_{t}:t\in[0,1]\}$. It is also a Hilbert space with respect
to the inner product 
\[
\langle h_{1},h_{2}\rangle_{\bar{{\cal H}}}\triangleq\mathbb{E}[Z_{1}Z_{2}],
\]
where $Z_{i}$ is the unique $L^{2}$-element associated with $h_{i}$
($i=1,2$).
\end{defn}

Note that ${\cal H}$ and $\bar{{\cal H}}$ are different spaces (as
sets). Nonetheless, they are isometrically isomorphic through the
important notion of Paley-Wiener integral which we now define. For
indicator functions, the map ${\cal I}_{1}:{\bf 1}_{[0,t]}\mapsto X_{t}$
is clearly an isometric embedding into $L^{2}(\mathbb{P})$. As a
result, it extends to an isometric embedding ${\cal I}_{1}:{\cal H}\rightarrow L^{2}(\mathbb{P})$
in a canonical way.
\begin{defn}
The embedding ${\cal I}_{1}:{\cal H}\rightarrow L^{2}(\mathbb{P})$
is called the \textit{Paley-Wiener integral} \textit{map} associated
with the fBM.
\end{defn}

The following classical result gives a canonical identification between
the two spaces ${\cal H}$ and $\bar{{\cal H}}$.
\begin{thm}
There is a canonical isometric isomorphism ${\cal R}:{\cal H}\rightarrow\bar{{\cal H}}$
defined by 
\[
[{\cal R}(h)]_{t}\triangleq\mathbb{E}[{\cal I}_{1}(h)X_{t}],\ \ \ t\in[0,1]
\]
for each $h\in{\cal H}$.
\end{thm}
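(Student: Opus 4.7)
The plan is to verify directly that $\mathcal{R}$ is well-defined into $\bar{\mathcal{H}}$, is an isometry, and is surjective---each of which will reduce to unwinding the two parallel definitions of the Cameron--Martin space and observing that the Paley--Wiener map $\mathcal{I}_1$ already encodes the needed identifications.

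First I would verify well-definedness and that $\mathcal{R}(h) \in \bar{\mathcal{H}}$ for every $h \in \mathcal{H}$. The key observation is that the image $\mathcal{I}_1(\mathcal{H})$ is exactly the $L^2(\mathbb{P})$-completion of linear combinations of $\{X_t : t \in [0,1]\}$: since $\mathcal{I}_1$ is an isometric embedding of Hilbert spaces sending the total family $\{\mathbf{1}_{[0,s]}\}$ to the family $\{X_s\}$, its range is closed and contains the linear span of $\{X_s\}$, and these two facts together force the range to coincide with the $L^2$-closure of that span. Consequently, $Z := \mathcal{I}_1(h)$ is a legitimate representing element in the sense required by the definition of $\bar{\mathcal{H}}$, so $[\mathcal{R}(h)]_t = \mathbb{E}[Z X_t]$ defines an element of $\bar{\mathcal{H}}$. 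Continuity of the path $t \mapsto [\mathcal{R}(h)]_t$ follows from Cauchy--Schwarz and the $H$-H\"older covariance bound $\mathbb{E}[(X_t - X_s)^2]^{1/2} = |t-s|^H$.

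Next I would check the isometry property. For $h_1, h_2 \in \mathcal{H}$, set $Z_i := \mathcal{I}_1(h_i)$; by the previous step each $Z_i$ is the $L^2$-element associated to $\mathcal{R}(h_i)$, with uniqueness inherited from the injectivity of $\mathcal{I}_1$. Then, by the very definition of the inner product on $\bar{\mathcal{H}}$ together with the isometry of $\mathcal{I}_1$,
\[
\langle \mathcal{R}(h_1), \mathcal{R}(h_2) \rangle_{\bar{\mathcal{H}}} = \mathbb{E}[Z_1 Z_2] = \langle \mathcal{I}_1(h_1), \mathcal{I}_1(h_2) \rangle_{L^2(\mathbb{P})} = \langle h_1, h_2 \rangle_{\mathcal{H}}.
\]

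Finally, for surjectivity, given $h \in \bar{\mathcal{H}}$ with representing element $Z$, the first-step identification yields a unique $g \in \mathcal{H}$ with $\mathcal{I}_1(g) = Z$, and then $\mathcal{R}(g) = h$ directly from the definition. I do not anticipate a real obstacle here---the content is essentially that two equivalent Hilbert-space completions of the same test family are canonically identified through their common pairing with the underlying Gaussian process. The only point that deserves a line of care is the identification $\mathcal{I}_1(\mathcal{H}) = \overline{\mathrm{span}\{X_t\}}^{L^2(\mathbb{P})}$, since the two occurrences of the word ``completion'' (one in $\mathcal{H}$, one in $\bar{\mathcal{H}}$) must be reconciled before the definitions of $\bar{\mathcal{H}}$ and $\mathcal{R}$ fit together cleanly.
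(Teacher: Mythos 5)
Your argument is correct; note that the paper states this identification as a classical fact and offers no proof of its own, so there is nothing to contrast with---your direct verification (showing $\mathcal{I}_1(\mathcal{H})=\overline{\mathrm{span}}\{X_t:t\in[0,1]\}$ in $L^2(\mathbb{P})$, then reading off well-definedness, isometry and surjectivity of $\mathcal{R}$ from the two definitions) is precisely the standard argument the paper leaves to the references. The only small touch-up: the uniqueness of the representing element $Z$ for a given path in $\bar{\mathcal{H}}$ is not a consequence of injectivity of $\mathcal{I}_1$ but of the fact that a $Z$ in the closed span of $\{X_t\}$ with $\mathbb{E}[ZX_t]=0$ for all $t$ is orthogonal to a dense subspace and hence zero (this is also what the paper's definition of $\bar{\mathcal{H}}$ implicitly assumes), after which your isometry and surjectivity steps go through verbatim.
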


We now recall a basic representation of the ${\cal H}$-norm that
plays a central role in our analysis (cf. \cite[Theorem 2.5]{BJ15}).
\begin{thm}
For every $f\in{\cal H},$ one has 
\begin{equation}
\|f\|_{{\cal H}}^{2}=\frac{1}{2}H(1-2H)\int_{\mathbb{R}^{2}}\frac{(\bar{f}(x)-\bar{f}(y))^{2}}{|x-y|^{2-2H}}dxdy,\label{eq:fSobRep}
\end{equation}
where $\bar{f}(x)\triangleq f(x){\bf 1}_{[0,1]}(x).$ In addition,
${\cal H}$ coincides with the space of $f\in L^{2}([0,1])$ such
that the right hand side of (\ref{eq:fSobRep}) is finite.
\end{thm}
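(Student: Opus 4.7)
The plan is to verify the identity first on a dense subclass of step functions and then extend by continuity, finally identifying ${\cal H}$ as a concrete function space. For a single increment $g = {\bf 1}_{[s,t]}$ with $s < t$, the definition of ${\cal H}$ yields
\[
\|g\|_{\cal H}^2 = R(t,t) - 2R(s,t) + R(s,s) = (t-s)^{2H}
\]
from the fBM covariance. On the other side, $(g(x)-g(y))^2$ equals $1$ exactly when one of $x,y$ lies in $[s,t]$ and the other does not, so by symmetry the double integral reduces to
\[
2\int_s^t \int_{\mathbb{R} \setminus [s,t]} \frac{dy\, dx}{|x-y|^{2-2H}},
\]
which, using the antiderivative $|x-y|^{2H-1}/(2H-1)$, evaluates in closed form to $2(t-s)^{2H}/(H(1-2H))$. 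Multiplying by $\tfrac{1}{2}H(1-2H)$ recovers $(t-s)^{2H}$, establishing (\ref{eq:fSobRep}) for single indicators.

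Next, by polarization both sides agree as bilinear forms on pairs ${\bf 1}_{[0,s]}, {\bf 1}_{[0,t]}$, and hence on arbitrary finite linear combinations of such indicators. Since these combinations are dense in ${\cal H}$ by definition, any sequence of step functions that is Cauchy in the ${\cal H}$-norm is automatically Cauchy in the right-hand seminorm of (\ref{eq:fSobRep}) and conversely, so the identity extends to all of ${\cal H}$ by continuity.

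The main obstacle lies in the second assertion: realising ${\cal H}$ concretely as the set of $f \in L^2([0,1])$ for which the right-hand side of (\ref{eq:fSobRep}) is finite. The inclusion ``$\subseteq$'' follows from the construction, since every element of ${\cal H}$ arises as an $L^2$-limit of step functions whose right-hand sides form a Cauchy sequence. For the reverse inclusion I would approximate any admissible $f$ (extended by zero to $\mathbb{R}$) by its piecewise-constant averages on dyadic refinements of $[0,1]$, and control the error in the fractional-Sobolev seminorm via a Minkowski-type inequality; the assumption $H < 1/2$ is precisely what makes the singular kernel $|x-y|^{2H-2}$ integrable in the relevant sense, so that the dyadic averages converge to $f$ in that seminorm. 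The density thus obtained, combined with the isometry already established on step functions, completes the identification.
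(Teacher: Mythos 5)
The paper does not actually prove this statement: it is quoted from Bardina--Jolis \cite[Theorem 2.5]{BJ15}, so there is no in-paper proof to compare against. Your computational core is correct and is the standard starting point: the evaluation $\|{\bf 1}_{[s,t]}\|_{{\cal H}}^{2}=(t-s)^{2H}$, the explicit integral $\tfrac12 H(1-2H)\cdot 2\int_{s}^{t}\int_{\mathbb{R}\setminus[s,t]}|x-y|^{2H-2}\,dy\,dx=(t-s)^{2H}$, and the polarization argument together give equality of the two quadratic forms on the linear span of indicator functions.

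The gap is in the second half. Recall that ${\cal H}$ is \emph{defined} as an abstract Hilbert-space completion, so the phrase ``the identity extends to all of ${\cal H}$ by continuity'' only produces an abstract isometry between two completions; it does not yet say that an element $f\in{\cal H}$ \emph{is} an $L^{2}$ function to which the right-hand side of (\ref{eq:fSobRep}) can be applied, and the assertion ``the inclusion $\subseteq$ follows from the construction'' presupposes exactly this identification, which is the actual content of the theorem. To close this you need three ingredients you do not supply: (a) the Gagliardo-type norm with the zero extension dominates $\|f\|_{L^{2}([0,1])}$ (since for $x\in(0,1)$ one has $\int_{\mathbb{R}\setminus[0,1]}|x-y|^{2H-2}dy\geqslant c_{H}>0$), so an ${\cal H}$-Cauchy sequence of step functions is $L^{2}$-Cauchy and its limit is a genuine function; (b) the space $W=\{f\in L^{2}([0,1]):\text{RHS of (\ref{eq:fSobRep})}<\infty\}$ is complete (Fatou along an a.e.\ convergent subsequence), which is what makes the embedding ${\cal H}\hookrightarrow L^{2}$ injective and gives ${\cal H}\subseteq W$ together with the identity; and (c) density of step functions in $W$, which you only gesture at. For (c), the relevant point is not integrability of the kernel per se but that the Sobolev index is $s=\tfrac12-H<\tfrac12$, which is what makes indicator functions admissible and the dyadic averaging operators uniformly bounded on $W$, so that $E_{n}f\rightarrow f$ in the seminorm; this requires a real argument (your ``Minkowski-type inequality'' sketch is plausible but not a proof). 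With (a)--(c) supplied, your outline does yield the theorem, essentially along the lines of the cited reference.
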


We shall stop the list of fBM properties for now; in later sections
we will either quote or prove more whenever it becomes relevant to
us.

\vspace{2mm} We conclude this section by mentioning how the rough
integral $\int_{0}^{1}\phi(X_{t})dY_{t}$ is defined in the most obvious
way. Let $(X,Y)$ be a two-dimensional fBM (i.e. $X,Y$ are i.i.d.
fBMs) with Hurst parameter $H\in(1/4,1).$ For each $m\in\mathbb{N}$,
let $X^{(m)}$ be the $m$-th dyadic piecewise linear interpolation
of $X$, i.e. $X_{k/2^{m}}^{(m)}=X_{k/2^{m}}$ for all $k=0,1,\cdots,2^{m}$
and $X_{t}^{(m)}$ is linear on each sub-interval $[(k-1)/2^{m},k/2^{m}]$.
Define $Y^{(m)}$ in the same way. Note that $\int_{0}^{1}\phi(X_{t}^{(m)})dY_{t}^{(m)}$
is well-defined as a Riemann-Stieltjes integral for each $m$. It
is shown in rough path theory that the limit of $\int_{0}^{1}\phi(X_{t}^{(m)})dY_{t}^{(m)}$
exists a.s. and in $L^{p}$ (for all $p\geqslant1$) as $m\rightarrow\infty$.
The resulting random variable is the rough integral $\int_{0}^{1}\phi(X_{t})dY_{t}.$
\begin{rem}
For the sake of conciseness and readability, we have chosen not to
get into any substantial definitions of rough paths or rough integration.
This will not affect the main discussion, as for most of the time
rough path analysis is not essentially needed for our purpose.
\end{rem}

\section{\label{sec:ProofMain}Proof of Theorem \ref{thm:main}}

Our main strategy of proving Theorem \ref{thm:main} consists of the
following three steps:

\vspace{2mm} \textit{Step one}. By conditioning on $X$, the integral
$\int_{0}^{1}\phi(X_{t})dY_{t}$ becomes Gaussian with (random) variance
denoted as $I(X)$. The tail probability on the left hand side of
(\ref{eq:MainEst}) is easily related to a suitable integral of $I(X)$.

\textit{Step two}. By using the fractional Sobolev-norm representation
(\ref{eq:fSobRep}), one can obtain a lower estimate of the tail probability
$\mathbb{P}(I(X)>\lambda)$ .

\textit{Step three}. The lower tail estimate of $I(X)$ translates
to a corresponding lower tail estimate of the integral $\int_{0}^{1}\phi(X_{t})dY_{t}$,
in view of the relation obtained in the first step.

\vspace{2mm} In the sequel, we develop the above three ingredients
precisely. Throughout the rest of this section, unless otherwise stated,
$(X,Y)$ is a two-dimensional fBM with Hurst parameter $H\in(1/4,1/2)$
and $\phi:\mathbb{R}\rightarrow\mathbb{R}$ is a given fixed $C_{b}^{\infty}$-function
satisfying the condition (\ref{eq:NonDegPhi}).

\subsection{\label{subsec:FracRepCV}A fractional Sobolev-norm representation
of the conditional variance}

Our entire argument relies critically on the following fractional
Sobolev-norm representation of the conditional variance of $\int_{0}^{1}\phi(X_{t})dY_{t}$.
\begin{prop}
\label{prop:fSobRepCV}Conditional on $X,$ the random variable $\int_{0}^{1}\phi(X_{t})dY_{t}$
is Gaussian with mean zero and (random) variance 
\begin{equation}
\mathbb{E}\big[\big(\int_{0}^{1}\phi(X_{t})dY_{t}\big)^{2}\big|X\big]=\frac{H(1-2H)}{2}\int_{\mathbb{R}^{2}}\frac{\big(\phi(X_{t}){\bf 1}_{[0,1]}(t)-\phi(X_{s}){\bf 1}_{[0,1]}(s)\big){}^{2}}{|t-s|^{2-2H}}dsdt.\label{eq:RepCV}
\end{equation}
In particular, the integral on the right hand side is finite a.s.
\end{prop}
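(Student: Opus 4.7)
The plan is to work conditionally on $X$, exploit the independence of $X$ and $Y$, and identify the rough integral $\int_{0}^{1}\phi(X_{t})\,dY_{t}$ with the Paley--Wiener integral ${\cal I}_{1}^{Y}(\phi(X_{\cdot})\mathbf{1}_{[0,1]})$ of a deterministic (once $X$ is fixed) integrand against $Y$. Once this identification is in hand, conditional Gaussianity with mean zero is automatic, and Theorem~2.4 applied to $f=\phi(X_{\cdot})\mathbf{1}_{[0,1]}$ delivers both the variance formula \eqref{eq:RepCV} and the a.s.\ finiteness of the right-hand side.

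At the dyadic level $m$, writing $\dot Y_{t}^{(m)}=2^{m}(Y_{(k+1)/2^{m}}-Y_{k/2^{m}})$ on each subinterval gives
\[
I^{(m)}:=\int_{0}^{1}\phi(X_{t}^{(m)})\,dY_{t}^{(m)}=\sum_{k=0}^{2^{m}-1}c_{k}^{(m)}\bigl(Y_{(k+1)/2^{m}}-Y_{k/2^{m}}\bigr),\quad c_{k}^{(m)}:=2^{m}\!\int_{k/2^{m}}^{(k+1)/2^{m}}\phi(X_{t}^{(m)})\,dt,
\]
so that $I^{(m)}={\cal I}_{1}^{Y}(g^{(m)})$ where $g^{(m)}:=\sum_{k}c_{k}^{(m)}\mathbf{1}_{(k/2^{m},(k+1)/2^{m}]}\in{\cal H}$. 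Since $X$ and $Y$ are independent and the coefficients $c_{k}^{(m)}$ depend only on $X$, the variable $I^{(m)}$ is, conditional on $X$, mean-zero Gaussian with variance $\|g^{(m)}\|_{{\cal H}}^{2}$.

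The next step is to show that $g^{(m)}\to\phi(X_{\cdot})\mathbf{1}_{[0,1]}$ in ${\cal H}$ almost surely. Using the fractional Sobolev representation of Theorem~2.4, this reduces to a dominated-convergence argument for the double integral with kernel $|t-s|^{-(2-2H)}$. The a.s.\ $\alpha$-Hölder continuity of $\phi(X_{\cdot})$ for any $\alpha<H$ (from $\phi\in C_{b}^{\infty}$ and the Hölder regularity of $X$) controls the interior contribution by a multiple of $\iint_{[0,1]^{2}}|t-s|^{2\alpha+2H-2}\,ds\,dt$; this is finite precisely because one can pick $\alpha$ with $2\alpha+2H>1$, which is where the constraint $H>1/4$ enters decisively. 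The boundary contribution over $\{s\in[0,1],\,t\notin[0,1]\}$ is handled using $H<1/2$ to make the singular kernel integrable near the endpoints. This ${\cal H}$-convergence translates into $L^{2}(\mathbb{P})$-convergence ${\cal I}_{1}^{Y}(g^{(m)})\to{\cal I}_{1}^{Y}(\phi(X_{\cdot})\mathbf{1}_{[0,1]})$.

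Finally I would match the two limits: the rough-path construction recalled in Section~2 ensures $I^{(m)}\to\int_{0}^{1}\phi(X_{t})\,dY_{t}$ in $L^{2}$ and a.s., so the limit coincides with ${\cal I}_{1}^{Y}(\phi(X_{\cdot})\mathbf{1}_{[0,1]})$. Mean-zero Gaussianity conditional on $X$ passes to the $L^{2}$-limit, and the conditional variance equals $\|\phi(X_{\cdot})\mathbf{1}_{[0,1]}\|_{{\cal H}}^{2}$, which Theorem~2.4 rewrites as the right-hand side of \eqref{eq:RepCV}. I expect the main technical obstacle to be the ${\cal H}$-convergence $g^{(m)}\to\phi(X_{\cdot})\mathbf{1}_{[0,1]}$ (rather than mere pointwise or uniform convergence), owing to the singular kernel; this is where the restriction $H\in(1/4,1/2)$ plays its essential role.
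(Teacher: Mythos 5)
Your proposal is correct in outline and follows the same broad strategy as the paper --- condition on $X$, identify the integral with a Paley--Wiener integral ${\cal I}_1$ against $Y$, prove ${\cal H}$-convergence of the approximating integrands, and invoke the representation \eqref{eq:fSobRep} --- but the implementation differs in two genuine ways. First, you approximate both coordinates, so that $I^{(m)}=\int_0^1\phi(X^{(m)}_t)\,dY^{(m)}_t$ is an explicit finite Gaussian sum whose a.s.\ and $L^2$ convergence to the rough integral is exactly the definition recalled at the end of Section 2; the paper instead keeps $Y$ intact (Lemma \ref{lem:CVviaPLA}), which forces it to quote continuity of rough integration under piecewise-linear approximation of the driver, together with a conditional characteristic-function and Fernique argument to pass the conditional variance to the limit. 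Second, your ${\cal H}$-convergence step works directly on the fractional Sobolev double integral for the step functions $g^{(m)}$, whereas the paper keeps all approximants H\"older (namely $\phi(X^{(m)})$) and converts H\"older convergence (Lemma \ref{lem:PLIHolder}) into ${\cal H}$-convergence through the embedding ${\cal C}_0^{\gamma}\subseteq{\cal H}$ for $\gamma>1/2-H$ (Lemma \ref{lem:HolEmb}), which is the technically heaviest ingredient. Your route avoids that embedding and the external rough-path continuity citations; the paper's route buys a clean one-line limit passage once the embedding is established. Both hinge on choosing $\alpha\in(1/2-H,H)$, i.e.\ on $H>1/4$, as you correctly isolate.

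The one step you must repair is the justification of $g^{(m)}\rightarrow\phi(X_{\cdot})\mathbf{1}_{[0,1]}$ in ${\cal H}$: it is not a straightforward dominated-convergence argument with majorant $C|t-s|^{2\alpha+2H-2}$, because the $g^{(m)}$ are step functions. Across a dyadic point the increment $|g^{(m)}(t)-g^{(m)}(s)|$ is of order $2^{-m\alpha}$ even when $|t-s|\ll 2^{-m}$, so it is not bounded by $C|t-s|^{\alpha}$ uniformly in $m$, and the H\"older regularity of $\phi(X)$ alone does not dominate the integrand. The estimate still goes through: split the double integral at scale $|t-s|\sim 2^{-m}$ and treat pairs straddling a dyadic point separately; each such point contributes about $2^{-2m\alpha}\int_0^{2^{-m}}u^{2H-1}du\sim 2^{-2m(\alpha+H)}$, and summing over the $\sim 2^{m}$ points gives $2^{-m(2\alpha+2H-1)}\rightarrow 0$ precisely when $\alpha>1/2-H$ (alternatively one may dominate by $\sup_m|g^{(m)}(t)-g^{(m)}(s)|$, but verifying its square-integrability against the kernel requires the same dyadic bookkeeping). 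With that estimate supplied, and the boundary strips handled via $H<1/2$ as you indicate, the identification of the conditional law and the variance formula \eqref{eq:RepCV} follow as you describe.
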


Such a representation is clear at least at a formal level. Indeed,
when freezing the $X$-path, the integral $\int_{0}^{1}\phi(X_{t})dY_{t}$
resembles a Paley-Wiener integral with respect to $Y$. The relation
(\ref{eq:RepCV}) simply becomes (\ref{eq:fSobRep}) with $f=\phi(X)$.
However, some care is needed to make this precise since $\int_{0}^{1}\phi(X_{t})dY_{t}$
is a rough path integral by its definition. We break down the proof
of (\ref{eq:RepCV}) into a couple of basic lemmas.

The first lemma shows that the conditional variance can be computed
through piecewise linear approximation.
\begin{lem}
\label{lem:CVviaPLA}Let $X^{(m)}$ denote the $m$-th dyadic piecewise
linear interpolation of $X$. Then one has
\begin{equation}
\mathbb{E}\big[\big(\int_{0}^{1}\phi(X_{t})dY_{t}\big)^{2}\big|X\big]=\lim_{m\rightarrow\infty}\mathbb{E}\big[\big(\int_{0}^{1}\phi(X_{t}^{(m)})dY_{t}\big)^{2}\big|X\big]\ \ \ \text{a.s.}\label{eq:CVviaPLA}
\end{equation}
\end{lem}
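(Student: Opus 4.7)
The plan is to first interpret each approximating integral on the right-hand side of (\ref{eq:CVviaPLA}) via classical Young integration, then use continuity of the rough integration map to pass to the limit in $m$, and finally convert the resulting $L^{2}$-convergence of integrals into a.s. convergence of conditional variances using the Gaussian structure.

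To begin, since $X^{(m)}$ is piecewise linear, the path $\phi(X^{(m)})$ has bounded variation; hence $\int_{0}^{1}\phi(X_{t}^{(m)})dY_{t}$ is well-defined as a Young integral (e.g.\ via integration by parts) and agrees with the rough integral against the canonical lift of $(X^{(m)},Y)$. Once $X$ is fixed, this integral becomes a Paley--Wiener-type linear functional in the Gaussian process $Y$, hence is conditionally Gaussian with mean zero, with conditional variance $\|\phi(X^{(m)})\|_{\mathcal{H}}^{2}$ by (\ref{eq:fSobRep}). I would then invoke standard Gaussian rough path estimates to conclude that the canonical lift of $(X^{(m)},Y)$ converges both a.s.\ and in $L^{p}(\mathbb{P})$, for every $p\geq 1$, to the canonical lift of $(X,Y)$ in a suitable $p$-variation or H\"older rough path metric; the independence of $X$ and $Y$ is crucial for controlling the mixed iterated integrals. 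Combined with local Lipschitz continuity of rough integration, this yields $I_{m}:=\int_{0}^{1}\phi(X_{t}^{(m)})dY_{t}\to I:=\int_{0}^{1}\phi(X_{t})dY_{t}$ both a.s.\ and in $L^{2}(\mathbb{P})$.

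To translate this into the statement about conditional variances, I observe that $I$, as an $L^{2}$-limit of the conditionally centered Gaussians $I_{m}$, is itself conditionally Gaussian with mean zero given $X$; hence $I_{m}-I$ is also conditionally centered Gaussian, and $\mathbb{E}[(I_{m}-I)^{2}]\to 0$ gives $\mathrm{Var}(I_{m}-I\,|\,X)\to 0$ in $L^{1}(\mathbb{P})$. Passing to a subsequence (or exploiting an explicit geometric rate in the rough path estimates via Borel--Cantelli) promotes this to almost sure convergence, and Cauchy--Schwarz applied to the identity
\[
\mathrm{Var}(I_{m}\,|\,X)-\mathrm{Var}(I\,|\,X)=\mathrm{Var}(I_{m}-I\,|\,X)+2\,\mathrm{Cov}(I_{m}-I,\,I\,|\,X)
\]
gives the claimed a.s.\ convergence of conditional variances. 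The main obstacle I anticipate is the rough path convergence step: securing a.s.\ (not merely $L^{2}$) convergence of the conditional variances for the \emph{full} sequence requires careful quantitative control of the rate at which the lift of $(X^{(m)},Y)$ approximates that of $(X,Y)$, and this is precisely where the independence of $X$ and $Y$ together with Paley--Wiener machinery for the cross iterated integrals play an essential role.
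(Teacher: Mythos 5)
Your first two steps (Young/rough interpretation of the approximants, conditional Gaussianity with variance $\|\phi(X^{(m)})\|_{\mathcal H}^{2}$, and a.s.\ plus $L^{p}$ convergence of $I_{m}=\int_{0}^{1}\phi(X^{(m)}_{t})dY_{t}$ to $I=\int_{0}^{1}\phi(X_{t})dY_{t}$) are sound and consistent with what the paper uses. The gap is exactly the step you yourself flag as the ``main obstacle'': converting $L^{2}(\mathbb{P})$ convergence of $I_{m}$ into a.s.\ convergence of the conditional variances \emph{along the full sequence}. From $\mathbb{E}[(I_{m}-I)^{2}]\to 0$ you only get $\mathbb{E}[(I_{m}-I)^{2}\,|\,X]\to 0$ in $L^{1}(\mathbb{P})$, hence a.s.\ only along a subsequence; but the lemma asserts a.s.\ convergence of $\mathbb{E}[I_{m}^{2}\,|\,X]$ as $m\to\infty$, not along a subsequence. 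Your proposed remedy (``an explicit geometric rate \ldots via Borel--Cantelli'') is plausible in principle --- a summable rate $\sum_{m}\mathbb{E}[(I_{m}-I)^{2}]<\infty$ would give $\sum_{m}\mathbb{E}[(I_{m}-I)^{2}\,|\,X]<\infty$ a.s.\ and hence full-sequence a.s.\ convergence, and such dyadic rates do exist in the literature (e.g.\ Friz--Riedel) --- but you do not carry it out, so as written the argument does not prove the stated lemma. A second, smaller point: the assertion that $I$ is conditionally centered Gaussian because it is an $L^{2}$-limit of conditionally centered Gaussians is true but itself needs an argument at the level of conditional laws (conditional characteristic functions), and you use it as a black box; note also that this conditional Gaussianity of $I$ is part of what Proposition \ref{prop:fSobRepCV} is establishing, so one should be careful not to assume downstream facts here.

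For comparison, the paper avoids any quantitative rate by working with conditional characteristic functions: since $|\exp(i\theta I_{m})|\leqslant 1$ and $I_{m}\to I$ a.s.\ (full dyadic sequence, quoted from the rough paths literature), the conditional dominated convergence theorem gives a.s.\ convergence of $\mathbb{E}[\exp(i\theta I_{m})\,|\,X]=\exp(-\tfrac{\theta^{2}}{2}\mathbb{E}[I_{m}^{2}\,|\,X])$, which forces the a.s.\ existence of $\lim_{m}\mathbb{E}[I_{m}^{2}\,|\,X]$ and simultaneously yields conditional Gaussianity of $I$; the identification of that limit with $\mathbb{E}[I^{2}\,|\,X]$ is then obtained by differentiating twice at $\theta=0$, justified by finiteness of moments of $I$ (Fernique-type estimates for the Gaussian rough path). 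If you want to keep your route, you must either import and verify a summable $L^{2}$ rate for the dyadic approximations, or switch to the bounded-test-function (characteristic function) device, which is the cleaner way to turn a.s.\ convergence of the integrals into a.s.\ convergence of the conditional variances.
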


\begin{proof}
For each fixed $m$, it is clear that the conditional distribution
of $\int_{0}^{1}\phi(X_{t}^{(m)})dY_{t}$ given $X$ is Gaussian,
more precisely, 
\[
\mathbb{E}\big[\exp\big(i\theta\int_{0}^{1}\phi(X_{t}^{(m)})dY_{t}\big)\big|X\big]=\exp\big(-\frac{\theta^{2}}{2}\mathbb{E}\big[\big(\int_{0}^{1}\phi(X_{t}^{(m)})dY_{t}\big)^{2}|X\big]\big)\ \ \ \forall\theta\in\mathbb{R}.
\]
In addition, from the continuity of rough integrals (cf. \cite[Theorem 10.50]{FV10}) one has 
\[
\int_{0}^{1}\phi(X_{t}^{(m)})dY_{t}\rightarrow\int_{0}^{1}\phi(X_{t})dY_{t}\ \ \ \text{a.s.}
\]
as $m\rightarrow\infty$ (\textcolor{black}{see for example Lemma
6 in \cite{RiedelFullConvergenceRate}, or \cite{RiedelXu} for the
$H>\frac{1}{3}$ case}). By the conditional dominated convergence
theorem, one concludes that 
\begin{equation}
\mathbb{E}\big[\exp\big(i\theta\int_{0}^{1}\phi(X_{t})dY_{t}\big)\big|X\big]=\exp\big(-\frac{\theta^{2}}{2}\lim_{m\rightarrow\infty}\mathbb{E}\big[\big(\int_{0}^{1}\phi(X_{t}^{(m)})dY_{t}\big)^{2}|X\big]\big).\label{eq:ChF}
\end{equation}
This relation indicates that $\int_{0}^{1}\phi(X_{t})dY_{t}\big|_{X}$
is Gaussian and the limit inside the above exponential must exist
a.s. The finiteness of moments of $\int_{0}^{1}\phi(X_{t})dY_{t}$
(which is easily implied by e.g. Fernique's lemma for the fractional
Brownian rough path; cf. \cite[Theorem 15.33]{FV10}) allows one to
differentiate the relation (\ref{eq:ChF}) with respect to $\theta$.
The desired identity (\ref{eq:CVviaPLA}) drops out after twice differentiation
at $\theta=0$.
\end{proof}
In order to relate the conditional rough integral ($X$ being frozen)
to a Paley-Wiener integral, one needs an embedding property of the
Cameron-Martin space as well as a simple fact about piecewise linear
approximations under H\"older norm. We summarise them in the next
two lemmas.
\begin{lem}
\label{lem:HolEmb}Let $H\in(1/4,1/2)$. Then the inclusion ${\cal C}_{0}^{\gamma}\subseteq{\cal H}$
is a continuous embedding for every $\gamma>1/2-H$, where ${\cal C}_{0}^{\gamma}$
denotes the space of $\gamma$-H\"older continuous paths $h:[0,1]\rightarrow\mathbb{R}$
with $h_{0}=0$.
\end{lem}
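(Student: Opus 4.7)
The plan is to derive the embedding directly from the fractional Sobolev-norm representation of the Cameron-Martin norm given in the previous theorem, by decomposing the double integral over $\mathbb{R}^2$ into regions and estimating each piece using $\gamma$-H\"older continuity and the boundary condition $h_0=0$.

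First, I would fix $h \in {\cal C}_0^\gamma$ and write $\bar{h} = h \cdot \mathbf{1}_{[0,1]}$. The integral on the right hand side of (\ref{eq:fSobRep}) splits naturally into three pieces according to whether each of $x,y$ lies in $[0,1]$: the ``interior'' part $(x,y)\in[0,1]^2$, the two symmetric ``boundary'' parts where exactly one coordinate lies in $[0,1]$, and the part where neither lies in $[0,1]$ (which contributes zero).

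For the interior piece, I would apply the H\"older bound $|\bar{h}(x)-\bar{h}(y)| \leqslant \|h\|_{{\cal C}^\gamma}|x-y|^\gamma$, reducing the question to the finiteness of
\[
\int_{[0,1]^2} |x-y|^{2\gamma-(2-2H)}\,dx\,dy.
\]
By the change of variables $u=x-y$, $v=x+y$ (or simply integrating in polar-type coordinates near the diagonal), this is finite exactly when $2\gamma-(2-2H) > -1$, i.e.\ $\gamma > 1/2 - H$, which is precisely the hypothesis. For the boundary pieces, say $x\in[0,1]$ and $y>1$ (the other cases are symmetric), $\bar{h}(y)=0$ so the integrand is $h(x)^2/(y-x)^{2-2H}$. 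Since $h_0=0$, I have the uniform bound $|h(x)|\leqslant\|h\|_{{\cal C}^\gamma}$, and the inner integral
\[
\int_1^\infty \frac{dy}{(y-x)^{2-2H}} = \frac{(1-x)^{2H-1}}{1-2H}
\]
is finite because $H<1/2$, and integrable in $x$ over $[0,1]$ since $2H-1 > -1$.

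Combining these bounds gives $\|h\|_{\cal H}^2 \leqslant C(H,\gamma)\,\|h\|_{{\cal C}^\gamma}^2$, which establishes both the set-theoretic inclusion and continuity of the embedding. There is no real obstacle here beyond careful bookkeeping of the exponents; the only place where the hypothesis $\gamma > 1/2 - H$ is genuinely used is to ensure integrability on the diagonal in the interior piece, and the condition $H \in (1/4,1/2)$ beyond $H<1/2$ is not needed for this lemma per se (the $H>1/4$ restriction is inherited from the rough path setting of the main theorem).
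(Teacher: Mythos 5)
Your proof is correct, but it follows a genuinely different route from the paper. You derive the embedding from the fractional Sobolev-norm representation (\ref{eq:fSobRep}) (Theorem 2.5, quoted from [BJ15]), splitting the planar integral into the interior square $[0,1]^2$, where the H\"older bound reduces matters to $\int_{[0,1]^2}|x-y|^{2\gamma+2H-2}\,dx\,dy<\infty$ (exactly the condition $\gamma>1/2-H$), and the boundary strips, where $\bar h$ vanishes in one variable and $h_0=0$ gives the uniform bound $|h(x)|\leqslant\|h\|_{\gamma}$, with the singular factor integrable because $H<1/2$. The paper instead works with the Volterra-kernel representation $\|h\|_{{\cal H}}=\|K^{*}h\|_{L^{2}([0,1])}$ from Nualart, unwinds the fractional derivative $D_{1-}^{1/2-H}$, and estimates the resulting singular integrals term by term; that argument does not need the ``coincidence of spaces'' clause of Theorem 2.5, only the classical formula for the norm, whereas your argument does rely on that clause (finiteness of the right-hand side of (\ref{eq:fSobRep}) for an $L^{2}$ function implies membership in ${\cal H}$) to get the set-theoretic inclusion — but since the paper states this explicitly, your use of it is legitimate, and it buys you a shorter, more transparent proof with the explicit bound $\|h\|_{{\cal H}}\leqslant C_{H,\gamma}\|h\|_{\gamma}$. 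Your closing observation is also accurate: only $H<1/2$ is needed for the lemma itself; the restriction $H>1/4$ matters later so that the window $(1/2-H,H)$ for $\gamma$ is nonempty.
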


\begin{rem}
The set inclusion ${\cal C}_{0}^{\gamma}\subseteq{\cal H}$ is stated
in \cite[P.284]{Nua06} but its continuity is not clear over there.
\end{rem}

\begin{proof}
In what follows, $c_{H}$ denotes a universal constant depending only
on $H$ whose value may change from line to line but is of no importance.
Let $h\in{\cal C}_{0}^{\gamma}$. According to \cite[P.284]{Nua06},
its ${\cal H}$-norm is computed as $\|h\|_{{\cal H}}=\|K^{*}h\|_{L^{2}([0,1])}$,
where 
\[
(K^{*}h)(t)\triangleq c_{H}t^{1/2-H}\big(D_{1-}^{1/2-H}(s^{H-1/2}h_{s})\big)(t)
\]
and $D_{1-}^{1/2-H}$ denotes the fractional derivative operator.
Unwinding its definition explicitly, one has 
\begin{align*}
\|h\|_{{\cal H}}^{2} & =c_{H}\int_{0}^{1}t^{1-2H}\big[\frac{t^{H-1/2}h_{t}}{(1-t)^{1/2-H}}+\big(\frac{1}{2}-H\big)\int_{t}^{1}\frac{t^{H-1/2}h_{t}-s^{H-1/2}h_{s}}{(s-t)^{3/2-H}}ds\big]^{2}dt\\
 & \leqslant c_{H}\big[\int_{0}^{1}\frac{h_{t}^{2}}{(1-t)^{1-2H}}dt+\int_{0}^{1}t^{1-2H}\big(\int_{t}^{1}\frac{t^{H-1/2}h_{t}-s^{H-1/2}h_{s}}{(s-t)^{3/2-H}}ds\big)^{2}dt\big]\\
 & =:c_{H}(A+B).
\end{align*}

It is straightforward to see that (using $h_{0}=0$) 
\[
A\leqslant C_{H,\gamma}\cdot\|h\|_{\gamma}^{2}\ \ \ \text{where }\|h\|_{\gamma}\triangleq\sup_{s\neq t\in[0,1]}\frac{|h_{t}-h_{s}|}{|t-s|^{\gamma}}.
\]
To estimate $B$, one further writes 
\begin{align*}
B & =\int_{0}^{1}t^{1-2H}\big(\int_{t}^{1}\frac{t^{H-1/2}(h_{t}-h_{s})+(t^{H-1/2}-s^{H-1/2})h_{s}}{(s-t)^{3/2-H}}ds\big)^{2}dt\\
 & \leqslant2\int_{0}^{1}t^{1-2H}\big[\big(\int_{t}^{1}\frac{t^{H-1/2}(h_{t}-h_{s})}{(s-t)^{3/2-H}}ds\big)^{2}\\
 & \ \ \ +\big(\int_{t}^{1}\frac{(t^{H-1/2}-s^{H-1/2})(h_{s}-h_{0})}{(s-t)^{3/2-H}}ds\big)^{2}\big]dt\\
 & =:2(B_{1}+B_{2}).
\end{align*}
For the $B_{1}$-integral, note that 
\begin{align*}
\big|\int_{t}^{1}\frac{t^{H-1/2}(h_{t}-h_{s})}{(s-t)^{3/2-H}}ds\big| & \leqslant\|h\|_{\gamma}\cdot\int_{t}^{1}\frac{t^{H-1/2}(s-t)^{\gamma}}{(s-t)^{3/2-H}}ds\\
 & =C_{H,\gamma}t^{H-1/2}(1-t)^{\gamma+H-1/2}\cdot\|h\|_{\gamma}.
\end{align*}
This easily implies 
\[
B_{1}\leqslant C_{H,\gamma}\|h\|_{\gamma}^{2}.
\]
Similarly, for the $B_{2}$-integral, one has
\begin{align*}
\big|\int_{t}^{1}\frac{(t^{H-1/2}-s^{H-1/2})(h_{s}-h_{0})}{(s-t)^{3/2-H}}ds\big| & \leqslant\big|\int_{t}^{1}\frac{(t^{H-1/2}-s^{H-1/2})s^{\gamma}}{(s-t)^{3/2-H}}ds\big|\cdot\|h\|_{\gamma}\\
 & =t^{2H+\gamma-1}\big(\int_{1}^{1/t}\frac{(1-\rho^{H-1/2})\rho^{\gamma}}{(\rho-1)^{3/2-H}}d\rho\big)\cdot\|h\|_{\gamma}\\
 & \leqslant C_{H,\gamma}t^{2H+\gamma-1}\cdot\big(\frac{1}{t}\big)^{\gamma+H-1/2}\|h\|_{\gamma}\\
 & =C_{H,\gamma}t^{H-1/2}\cdot\|h\|_{\gamma}.
\end{align*}
As a result, one also finds that 
\[
B_{2}\leqslant C_{H,\gamma}\|h\|_{\gamma}^{2}.
\]
\end{proof}
\begin{lem}
\label{lem:PLIHolder}Let $x:[0,1]\rightarrow\mathbb{R}$ be an $\gamma$-H\"older
continuous path. For each $m\geqslant1$, let $x^{(m)}$ denote the
$m$-th dyadic piecewise linear interpolation of $x$. Then one has
\begin{equation}
\|x^{(m)}-x\|_{\beta}\leqslant4(2^{-m})^{\gamma-\beta}\|x\|_{\gamma}\label{eq:PLIHolder}
\end{equation}
for all $\beta<\gamma$.
\end{lem}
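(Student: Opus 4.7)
The plan is to deduce the bound by an interpolation between the sup-norm estimate and the $\gamma$-H\"older estimate for $x^{(m)}-x$, both of which are standard and straightforward to establish by exploiting the fact that $x^{(m)}$ agrees with $x$ at dyadic points and is linear in between. I will first prove an $L^\infty$ bound and a Hölder-seminorm bound, and then perform a case split on $|t-s|$ versus $2^{-m}$.

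For the sup-norm bound, I would fix $t \in [t_k,t_{k+1}]$ with $t_k=k/2^m$ and write $x^{(m)}(t)$ as a convex combination of $x(t_k)$ and $x(t_{k+1})$. Then $x^{(m)}(t)-x(t)$ is a convex combination of $x(t_k)-x(t)$ and $x(t_{k+1})-x(t)$, each bounded by $\|x\|_\gamma(2^{-m})^\gamma$, giving $\|x^{(m)}-x\|_\infty \leq \|x\|_\gamma (2^{-m})^\gamma$.

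For the H\"older seminorm, the main point is to show $\|x^{(m)}\|_\gamma \leq C\|x\|_\gamma$ for some explicit constant. When $s,t$ lie in a common dyadic cell, I would use that the linear slope of $x^{(m)}$ on that cell has magnitude at most $\|x\|_\gamma (2^{-m})^{\gamma-1}$ and combine with $|t-s| \leq 2^{-m}$ to recover the factor $|t-s|^\gamma$. When $s,t$ lie in distinct cells, say $s \in [t_j,t_{j+1}]$ and $t \in [t_k,t_{k+1}]$ with $j<k$, I would insert $t_{j+1}$ and $t_k$ and bound each of the three resulting increments by $\|x\|_\gamma (t-s)^\gamma$; this gives $\|x^{(m)}\|_\gamma \leq 3\|x\|_\gamma$ and hence, by the triangle inequality, $\|x^{(m)}-x\|_\gamma \leq 4\|x\|_\gamma$.

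Finally, setting $f = x^{(m)}-x$, for any $s \ne t \in [0,1]$ I would split into two cases. If $|t-s| \geq 2^{-m}$, I bound $|f(t)-f(s)|/|t-s|^\beta \leq 2\|f\|_\infty/(2^{-m})^\beta \leq 2\|x\|_\gamma (2^{-m})^{\gamma-\beta}$. If $|t-s|<2^{-m}$, I write $|f(t)-f(s)|/|t-s|^\beta = (|f(t)-f(s)|/|t-s|^\gamma)\cdot|t-s|^{\gamma-\beta} \leq \|f\|_\gamma (2^{-m})^{\gamma-\beta} \leq 4\|x\|_\gamma (2^{-m})^{\gamma-\beta}$. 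Taking the maximum over the two cases yields the claimed bound with constant $4$. The only mildly subtle step is the distinct-cell case of $\|x^{(m)}\|_\gamma \leq 3\|x\|_\gamma$; everything else is direct.
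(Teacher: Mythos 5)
Your proof is correct, and it takes a genuinely different route from the paper's. The paper argues in one shot: for $s,t$ in different dyadic cells it uses that $x^{(m)}$ and $x$ agree at dyadic points to write $x^{(m)}_{s,t}-x_{s,t}=\big(x^{(m)}_{t_l,t}-x_{t_l,t}\big)-\big(x^{(m)}_{s,t_k}-x_{s,t_k}\big)$, where $t_k,t_l$ are the dyadic points adjacent to $s,t$, and then bounds each of the four resulting increments by $(2^{-m})^{\gamma-\beta}\|x\|_{\gamma}|t-s|^{\beta}$ directly, exploiting that each increment occurs over a time span at most $\min(t-s,2^{-m})$; this yields the constant $4$ without ever estimating $\|x^{(m)}-x\|_{\infty}$ or $\|x^{(m)}\|_{\gamma}$ separately. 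You instead prove two intermediate facts --- the sup-norm bound $\|x^{(m)}-x\|_{\infty}\leqslant\|x\|_{\gamma}(2^{-m})^{\gamma}$ and the uniform H\"older bound $\|x^{(m)}\|_{\gamma}\leqslant3\|x\|_{\gamma}$ (your chaining through $t_{j+1}$ and $t_k$ in the distinct-cell case is fine) --- and then interpolate by splitting on $|t-s|\gtrless 2^{-m}$, getting the same constant $4$. Your route is more modular: the two auxiliary bounds are standard and reusable (they immediately give convergence of $x^{(m)}\to x$ in every $\beta$-H\"older norm with $\beta<\gamma$, which is all the paper ultimately needs in the proof of Proposition \ref{prop:fSobRepCV}), at the cost of an extra lemma-within-the-lemma. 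The paper's computation is shorter and self-contained but less transparent about where the exponent $\gamma-\beta$ comes from; the interpolation viewpoint makes that mechanism explicit.
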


\begin{proof}
This is straightforward calculation. Let $s<t$ be given. We only
consider the case when $s,t$ belong to different dyadic sub-intervals,
say $s\in[t_{k-1},t_{k}]$ and $t\in[t_{l},t_{l+1}]$ with $k\leqslant l$
(the case when $s,t$ belong to the same sub-interval is easier and
left to the reader). Since $x^{(m)}$ and $x$ agree on the dyadic
points, it is obvious that 
\[
x_{s,t}^{(m)}-x_{s,t}=\big(x_{t_{l},t}^{(m)}-x_{t_{l},t}\big)-\big(x_{s,t_{k}}^{(m)}-x_{s,t_{k}}\big),
\]
where $x_{s,t}\triangleq x_{t}-x_{s}$ and similarly for $x_{s,t}^{(m)}.$
We simply use triangle inequality to bound all these four terms, i.e.
one has 
\[
\big|\frac{x_{t_{l},t}}{(t-s)^{\beta}}\big|\leqslant\frac{(t-t_{l})^{\gamma}}{(t-s)^{\beta}}\|x\|_{\gamma}=\frac{(t-t_{l})^{\beta}(t-t_{l})^{\gamma-\beta}}{(t-s)^{\beta}}\|x\|_{\gamma}\leqslant(2^{-m})^{\gamma-\beta}\|x\|_{\gamma}
\]
and 
\[
\big|\frac{x_{t_{l},t}^{(m)}}{(t-s)^{\beta}}\big|=\frac{t-t_{l}}{2^{-m}}\cdot\frac{\big|x_{t_{l},t_{l+1}}\big|}{(t-s)^{\beta}}\leqslant\frac{t-t_{l}}{2^{-m}}\cdot\frac{(2^{-m})^{\gamma}}{(t-s)^{\beta}}\|x\|_{\gamma}\leqslant(2^{-m})^{\gamma-\beta}\|x_{\gamma}\|.
\]
The other two terms $x_{s,t_{k}}$ and $x_{s,t_{k}}^{(m)}$ are estimated
in the same way. The desired inequality (\ref{eq:PLIHolder}) thus
follows.
\end{proof}
We are now able to justify the representation (\ref{eq:RepCV}) precisely.

\begin{proof}[Proof of Proposition \ref{prop:fSobRepCV}]

In view of (\ref{eq:fSobRep}), it suffices to show that 
\begin{equation}
\mathbb{E}\big[\big(\int_{0}^{1}\phi(X_{t})dY_{t}\big)^{2}|X\big]=\mathbb{E}\big[{\cal I}_{1}(h)^{2}\big]\big|_{h=\phi(X)}\ \ \ \text{a.s.}\label{eq:CVWie}
\end{equation}
It is clear that ${\cal I}_{1}(\phi(z))=\int_{0}^{1}\phi(z_{t})dY_{t}$
when $z$ is a deterministic piecewise linear path. By Lemma \ref{lem:CVviaPLA},
one has
\[
\mathbb{E}\big[\big(\int_{0}^{1}\phi(X_{t})dY_{t}\big)^{2}|X\big]=\lim_{m\rightarrow\infty}\mathbb{E}\big[\big(\int_{0}^{1}\phi(X_{t}^{(m)})dY_{t}\big)^{2}\big|X\big]=\lim_{m\rightarrow\infty}\mathbb{E}[{\cal I}_{1}(h)^{2}]\big|_{h=\phi(X^{(m)})}.
\]
To reach the relation (\ref{eq:CVWie}), one first observes that $X$
has $\gamma$-H\"older sample paths a.s. for any $\gamma<H$. Since
$H>1/4$, it is possible to choose $\gamma\in(1/2-H,H)$. According
to Lemma \ref{lem:PLIHolder}, by sacrificing $\gamma$ a little bit
one can ensure that $\phi(X^{(m)})\rightarrow\phi(X)$ a.s. under
$\gamma$-H\"older norm. It then follows from Lemma \ref{lem:HolEmb}
that 
\[
\phi(X^{(m)})\rightarrow\phi(X)\ \text{a.s.\ \ \ in }{\cal H}.
\]
Consequently, by the Paley-Wiener isometry one finds that
\[
\lim_{m\rightarrow\infty}\mathbb{E}[{\cal I}_{1}(h)^{2}]\big|_{h=\phi(X^{(m)})}=\mathbb{E}[{\cal I}_{1}(h)^{2}]\big|_{h=\phi(X)}\ \ \ \text{a.s.}
\]
The relation (\ref{eq:CVWie}) thus follows.

\end{proof}

\subsection{\label{subsec:TailSpecInt}Lower tail estimate of the conditional
variance}

Recall that $I(X)\triangleq\mathbb{E}[\int_{0}^{1}\phi(X_{t})dY_{t}|X]$.
It is not hard to convince oneself that the tail behaviour of the
rough integral $\int_{0}^{1}\phi(X_{t})dY_{t}$ is closely related
to that of $I(X)$. We shall make this point quantitatively precise
in Section \ref{subsec:Step3} where we also complete the proof of
Theorem \ref{thm:main}. In this subsection, we establish the following
key lower estimate on the tail probability of $I(X)$.
\begin{prop}
\label{prop:IXLower}For any $\alpha>H+1/2$, there exist positive
constants $C_{1},C_{2}$ depending only on $H$ and $\alpha$, such
that 
\begin{equation}
\mathbb{P}(I(X)>\lambda)\geqslant C_{1}\exp\big(-C_{2}\lambda^{\frac{2\alpha}{1-2H}}\big)\label{eq:IXLower}
\end{equation}
for all large $\lambda$.
\end{prop}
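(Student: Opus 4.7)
The plan is a Cameron--Martin shift argument. I would construct a deterministic path $h_\mu$ in the Cameron--Martin space of fBM such that (a) $I(h_\mu)\geq 2\mu$ deterministically, and (b) the intrinsic Cameron--Martin norm $\|h_\mu\|_{\bar{\mathcal{H}}}^2$ has the correct scaling in $\mu$. A continuity estimate will then show that on the event $\{\|X-h_\mu\|_\gamma\leq r\}$ for suitable $\gamma\in(\tfrac12-H,H)$ and small $r>0$, one has $I(X)\geq I(h_\mu)/4\geq \mu/2$. The Cameron--Martin shift inequality
\[
\mathbb{P}(\|X-h_\mu\|_\gamma\leq r)\;\geq\;e^{-\|h_\mu\|_{\bar{\mathcal{H}}}^2/2}\,\mathbb{P}(\|X\|_\gamma\leq r)
\]
(a consequence of Borell-type symmetrization applied to the symmetric ball $\{\|X\|_\gamma \leq r\}$) then yields the desired lower bound on $\mathbb{P}(I(X)>\mu)$.

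For the construction, condition (\ref{eq:NonDegPhi}) combined with continuity of $\phi'$ furnishes an interval $[y^*,y^*+\delta_0]$ on which $|\phi'|\geq c_0/2$ with constant sign. Take $h_\mu$ to be a piecewise smooth path that moves quickly from $0$ to $y^*$ and then oscillates $N$ times within $[y^*,y^*+\delta_0]$. Using the Sobolev representation (\ref{eq:RepCV}) from Proposition~\ref{prop:fSobRepCV} and a dyadic scaling argument (as for a sawtooth or sine wave of frequency $N$), the oscillatory part of $\phi(h_\mu)$ yields $I(h_\mu)\asymp N^{1-2H}$; choosing $N\asymp\mu^{1/(1-2H)}$ gives $I(h_\mu)\geq 2\mu$. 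Crucially, the \emph{intrinsic} Cameron--Martin norm $\|h_\mu\|_{\bar{\mathcal{H}}}^2$ scales differently: a H\"older-type embedding into $\bar{\mathcal{H}}$ of exponent slightly above $H+\tfrac12$ (derivable from the kernel representation $\bar{\mathcal{H}}=K_H(L^2)$, in the spirit of Lemma~\ref{lem:HolEmb} but for the intrinsic space) gives $\|h_\mu\|_{\bar{\mathcal{H}}}^2\lesssim N^{2H+1+2\eta}$ for arbitrarily small $\eta>0$. Substituting $N\asymp\mu^{1/(1-2H)}$ yields $\|h_\mu\|_{\bar{\mathcal{H}}}^2\lesssim\mu^{2\alpha/(1-2H)}$ with $\alpha=H+\tfrac12+\eta$, which is precisely the form required in (\ref{eq:IXLower}).

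For the continuity step, on $\{\|X-h_\mu\|_\gamma\leq r\}$ one uses the Taylor expansion
\[
\phi(X_t)-\phi(h_t)-\phi(X_s)+\phi(h_s)=\phi'(h_t)\bigl[(X-h)_t-(X-h)_s\bigr]+\bigl[\phi'(h_t)-\phi'(h_s)\bigr](X_s-h_s)+O(|X-h|^2)
\]
together with Lemma~\ref{lem:HolEmb} to derive $\|\phi(X)-\phi(h_\mu)\|_{\mathcal{H}}^2\lesssim r^2\bigl(1+\|h_\mu\|_{\mathcal{H}}^2\bigr)\asymp r^2 N^{1-2H}$. Since $I(h_\mu)\asymp N^{1-2H}$, taking $r$ a small constant (independent of $N$) ensures $\|\phi(X)-\phi(h_\mu)\|_{\mathcal{H}}\leq\tfrac14\sqrt{I(h_\mu)}$, hence $I(X)\geq I(h_\mu)/4\geq \mu/2$ by the triangle inequality for the $\mathcal{H}$-norm. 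The small-ball factor $\mathbb{P}(\|X\|_\gamma\leq r)$ is then a fixed positive constant (this is where $H>\tfrac14$ is used, so that the range $\gamma\in(\tfrac12-H,H)$ is nonempty), and the dominant cost is the CM-shift exponential $e^{-\|h_\mu\|_{\bar{\mathcal{H}}}^2/2}$.

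The main obstacle is the delicate interplay between two distinct scalings: the non-intrinsic Sobolev norm of $\phi(h_\mu)$ entering $I(h_\mu)$ scales like $N^{1-2H}$, whereas the intrinsic Cameron--Martin norm of the path $h_\mu$ controlling the shift cost scales like $N^{2H+1}$. This scaling mismatch is exactly what produces the Weibull exponent $(2H+1)/(1-2H)$, and cleanly distinguishing the two norms is the core technical point. A secondary technical issue is arranging the continuity estimate so that $r$ stays bounded below as $N\to\infty$; if $r$ were forced to decay, the resulting small-ball cost would threaten to overwhelm the Cameron--Martin exponential and destroy the target exponent.
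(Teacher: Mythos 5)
Your outline is essentially correct and, although it shares the paper's overall architecture (a deterministic Cameron--Martin path making the conditional variance large, a continuity estimate localising $X$ in a H\"older tube around it, then a Cameron--Martin shift combined with a small-ball bound), the implementation of each step is genuinely different. The paper fixes a single self-similar Weierstrass path $h_{\alpha}\in\bar{{\cal H}}$ and scales its \emph{amplitude} by $\lambda$; the scaling identity (\ref{eq:ScalehAlp}) converts amplitude into frequency after composition with the bounded $\phi$, and because the location of $\lambda h_{\alpha}$ in space cannot be controlled, the full strength of (\ref{eq:NonDegPhi}) is needed there (via Lemma \ref{lem:CompositionLemma}). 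You instead build a family of bounded-amplitude paths of frequency $N$ confined to one fixed interval on which $|\phi'|\geqslant c_{0}/2$, so you only use the weak consequence of (\ref{eq:NonDegPhi}) that such an interval exists; and your continuity step is a triangle inequality in ${\cal H}$, using $I(x)=\|\phi(x)\|_{{\cal H}}^{2}$ from (\ref{eq:fSobRep}), rather than the paper's direct expansion of $J(u)-J(v)$ (Lemma \ref{lem:CtyEstJ}). This buys you a tube of \emph{constant} radius, so you only need positivity of $\mathbb{P}(\|X\|_{\gamma}\leqslant r)$ instead of the quantitative Kuelbs--Li--Shao rate (Lemma \ref{lem:SmallBall}) at a shrinking radius. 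The price is that the shift cost is no longer $\lambda^{2}\|h_{\alpha}\|_{\bar{{\cal H}}}^{2}$ for one fixed element: you must prove $\|h_{\mu}\|_{\bar{{\cal H}}}^{2}\lesssim N^{2\alpha}$ uniformly in $N$, i.e.\ a \emph{quantitative} embedding ${\cal C}_{0}^{\alpha}\hookrightarrow\bar{{\cal H}}$ for $\alpha>H+1/2$ (or a direct computation of the intrinsic norm of your explicit oscillation via the kernel or spectral representation). The paper only invokes the set inclusion from \cite{BG15} for a single path precisely to avoid this; your claim is true (closed graph, or an estimate in the spirit of Lemma \ref{lem:HolEmb} for the intrinsic space), but it is a genuine ingredient that must be supplied, not quoted.

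One step is too lossy as written: applying Lemma \ref{lem:HolEmb} to $\phi(X)-\phi(h_{\mu})$ as a whole gives only $\|\phi(X)-\phi(h_{\mu})\|_{{\cal H}}^{2}\lesssim r^{2}\|h_{\mu}\|_{\gamma}^{2}\asymp r^{2}N^{2\gamma}$ with $2\gamma>1-2H$, which would force $r\rightarrow0$ as $N\rightarrow\infty$ and reintroduce exactly the small-ball cost you are trying to avoid. The claimed bound $r^{2}(1+\|h_{\mu}\|_{{\cal H}}^{2})\asymp r^{2}N^{1-2H}$ is correct, but to obtain it you must estimate the cross term $[\phi'(h_{t})-\phi'(h_{s})](X_{s}-h_{s})$ of your Taylor decomposition directly in the integral representation (\ref{eq:fSobRep}): its seminorm is $\lesssim\|\phi''\|_{\infty}\|X-h_{\mu}\|_{\infty}\|h_{\mu}\|_{{\cal H}}$, while only the term $\phi'(h_{t})\bigl[(X-h)_{t}-(X-h)_{s}\bigr]$ and the quadratic remainder should go through the H\"older embedding. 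With that correction a constant radius $r$ of order $c_{0}/\|\phi''\|_{\infty}$ works uniformly in $N$, and the bookkeeping $N\asymp\mu^{1/(1-2H)}$, cost $\lesssim\mu^{2\alpha/(1-2H)}$, indeed yields (\ref{eq:IXLower}).
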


Before developing the details, it is helpful to first explain the
key idea behind the proof. One can think of the covariance function
$I(\cdot)$ as a positive functional on paths. For each given path
$h:[0,1]\rightarrow\mathbb{R}$, the function $\lambda\mapsto I(\lambda h)$
possesses a suitable growth property as $\lambda\rightarrow\infty$.
The essential point in the argument is to construct a Cameron-Martin
path $h$ such that this function achieves a ``maximal'' growth
rate. It turns out that such an $h$ should have ``worst'' regularity
within the Cameron-Martin space.

In what follows, we develop the major steps for proving Proposition
\ref{prop:IXLower}.

\subsubsection{Reduction of the double integral}

Let us begin with a simple reduction of the problem. We introduce
the following path functional 
\[
I(x)\triangleq\frac{H(1-2H)}{2}\int_{\mathbb{R}^{2}}\frac{\big(\phi(x_{t}){\bf 1}_{[0,1]}(t)-\phi(x_{s}){\bf 1}_{[0,1]}(s)\big){}^{2}}{|t-s|^{2-2H}}dsdt
\]
where $x:[0,1]\rightarrow\mathbb{R}$ is any continuous path, provided
that the right hand side is finite. We also set 
\begin{equation}
J(x)\triangleq\int_{0}^{1}\int_{0}^{t}\frac{\big(\phi(x_{t})-\phi(x_{s})\big){}^{2}}{|t-s|^{2-2H}}dsdt.\label{eq:JFunction}
\end{equation}
The lemma below reduces our problem to the study of the $J$-functional.
\begin{lem}
\label{lem:RedJ}One has 
\[
\sup_{x:[0,1]\rightarrow\mathbb{R}}\big|I(x)-H(1-2H)J(x)\big|<\infty.
\]
\end{lem}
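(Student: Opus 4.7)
The plan is to exploit the symmetry of the integrand defining $I(x)$ and then cut the domain of integration into the piece that reproduces $J$ and boundary pieces that are $x$-independently bounded thanks to $\phi$ being bounded.

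First I would symmetrize. Since the integrand in $I(x)$ is symmetric in $(s,t)$, I would write
\[
I(x)=H(1-2H)\int\!\!\!\int_{s<t}\frac{\bigl(\phi(x_t)\mathbf{1}_{[0,1]}(t)-\phi(x_s)\mathbf{1}_{[0,1]}(s)\bigr)^{2}}{(t-s)^{2-2H}}\,ds\,dt,
\]
absorbing the $1/2$. Then I would decompose the half-plane $\{s<t\}$ according to the positions of $s,t$ relative to $[0,1]$. The integrand vanishes whenever both $s$ and $t$ lie outside $[0,1]$, and also on the region $\{s<0,t>1\}$ since both indicators vanish. What remains is three regions: the diagonal square $R_{1}=\{0\leqslant s<t\leqslant 1\}$, the left strip $R_{2}=\{s<0,\,0\leqslant t\leqslant 1\}$, and the right strip $R_{3}=\{0\leqslant s\leqslant 1,\,t>1\}$.

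On $R_1$ both indicators are $1$, so the contribution is exactly $H(1-2H)\,J(x)$ (note $J$ already integrates over $\{0\le s<t\le 1\}$, matching the symmetrized form). The content of the lemma is therefore reducing to a uniform bound on the $R_2$ and $R_3$ contributions. On $R_2$ only $\phi(x_t)$ survives, and integrating out $s$ first,
\[
\int_{-\infty}^{0}\frac{ds}{(t-s)^{2-2H}}=\frac{t^{2H-1}}{1-2H},
\]
so the $R_2$-piece equals $\int_{0}^{1}\phi(x_t)^{2}\,\tfrac{t^{2H-1}}{1-2H}\,dt$, which is bounded by $\tfrac{\|\phi\|_{\infty}^{2}}{2H(1-2H)}$ independently of $x$ because $2H>0$ makes $t^{2H-1}$ integrable at $0$. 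The $R_3$-piece is estimated identically after integrating $t$ out. Multiplying back by the $H(1-2H)$ prefactor gives a bound of the form $C_{H}\|\phi\|_{\infty}^{2}$, which is uniform over all continuous paths $x$.

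There is no genuine obstacle: the argument is essentially bookkeeping of a singular double integral, with the only mild point being the integrability of $t^{2H-1}$ at the endpoint, which is guaranteed by $H\in(1/4,1/2)$. The $C_{b}^{\infty}$ regularity of $\phi$ is used only through $\|\phi\|_{\infty}<\infty$; derivatives of $\phi$ play no role here (they will only matter later, when a lower bound rather than a uniform upper bound is sought).
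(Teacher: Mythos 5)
Your proof is correct and is essentially the same as the paper's: decompose the double integral into the region where both times lie in $[0,1]$ (which reproduces $H(1-2H)J(x)$) and the boundary strips, then bound the strip contributions by $C_H\|\phi\|_\infty^2$ using the explicit integral $\int_{-\infty}^0(t-s)^{2H-2}\,ds = t^{2H-1}/(1-2H)$ and the integrability of $t^{2H-1}$ on $[0,1]$. The only cosmetic difference is that you symmetrize onto $\{s<t\}$ first, whereas the paper keeps the factor $1/2$ and works on the full plane; the estimates are identical.
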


\begin{proof}
The integral $I(x)$ can be decomposed into 
\begin{equation}
I(x)=H(1-2H)\big(J(x)+\int_{0}^{1}\big(\int_{-\infty}^{0}+\int_{1}^{\infty}\big)\frac{|\phi(x_{t})|^{2}}{|t-s|^{2-2H}}dsdt\big).\label{eq:J+B}
\end{equation}
It is easily seen that 
\[
\int_{0}^{1}\int_{-\infty}^{0}\frac{|\phi(x_{t})|^{2}}{|t-s|^{2-2H}}dsdt=\frac{1}{1-2H}\int_{0}^{1}t^{2H-1}|\phi(x_{t})|^{2}dt\leqslant C_{H}\|\phi\|_{\infty}^{2}<\infty.
\]
Similarly, the $\int_{0}^{1}\int_{1}^{\infty}$-integral in (\ref{eq:J+B})
is also bounded by a constant independent of $x$. The result thus
follows.
\end{proof}
\begin{rem}
\label{rem:IJEquiv}As a consequence of Lemma \ref{lem:RedJ}, in
order to prove Proposition \ref{prop:IXLower} it is sufficient to
establish the same inequality for $J(X)$.
\end{rem}

\subsubsection{A Weierstrass-type Cameron-Martin path}

The critical point in the argument is to construct a Cameron-Martin
path $h\in\bar{{\cal H}}$ such that the function $\lambda\mapsto J(\lambda h)$
achieves a fastest growth rate as $\lambda\rightarrow\infty.$ To
motivate its construction, our ansatz is that $h$ should have essentially
the \textit{worst} regularity within the Cameron-Martin space $\bar{{\cal H}}$.
It is a standard fact that $\bar{{\cal H}}$ contains $\alpha$-H\"older
continuous paths $h$ (with $h(0)=0$) for all $\alpha>H+1/2$ (cf.
\cite[Lemma 6.2]{BG15}). In addition, there is a continuous embedding
$\bar{{\cal H}}\hookrightarrow C^{q\text{-var}}$ for all $q>(H+1/2)^{-1}$
(cf. \cite[Corollary 1]{FV06}). These two properties together suggest
that a natural candidate $h$ should have H\"older regularity just
slightly better than $H+1/2$. A typical way of explicitly constructing
H\"older  functions is through a Weierstrass-type series.

Let $\alpha\in(0,1)$ be a given fixed exponent. We define the path
$h_{\alpha}:\mathbb{R}\rightarrow\mathbb{R}$ by
\begin{equation}
h_{\alpha}(t)\triangleq\sum_{n=-\infty}^{\infty}2^{-n\alpha}\sin2^{n}\pi t.\label{eq:Weierstrass}
\end{equation}
The main properties of $h_{\alpha}(t)$ that are essential to us are
summarised in the following lemma.
\begin{lem}
\label{lem:WeiHol}We write $h_{\alpha}(t)=f_{\alpha}(t)+g_{\alpha}(t),$
where
\[
f_{\alpha}(t)\triangleq\sum_{n=-\infty}^{0}2^{-n\alpha}\sin2^{n}\pi t,\ g_{\alpha}(t)\triangleq\sum_{n=1}^{\infty}2^{-n\alpha}\sin2^{n}\pi t.
\]
Then $f_{\alpha}(t)$ is Lipschitz continuous and $g_{\alpha}(t)$
is $\alpha$-H\"older continuous, more precisely, 
\begin{equation}
\big|f_{\alpha}(t)-f_{\alpha}(s)\big|\leqslant L|t-s|,\ \big|g_{\alpha}(t)-g_{\alpha}(s)\big|\leqslant L'|t-s|^{\alpha}\ \ \ \forall s,t\in\mathbb{R}\label{eq:LipHol}
\end{equation}
with some constants $L,L'>0.$ In addition, $g_{\alpha}(t)$ is $1$-periodic
on $\mathbb{R}$ and $h_{\alpha}(t)$ satisfies the following scaling
property: 
\begin{equation}
h_{\alpha}(2^{m}t)\triangleq2^{m\alpha}h_{\alpha}(t)\ \ \ \forall m\in\mathbb{Z},t\in\mathbb{R}.\label{eq:ScalehAlp}
\end{equation}
\end{lem}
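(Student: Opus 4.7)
The plan is to handle the four claims one at a time, reducing each to a textbook geometric series manipulation. For the Lipschitz bound on $f_{\alpha}$, I would reindex with $n = -k$ to rewrite
\[
f_{\alpha}(t) = \sum_{k=0}^{\infty} 2^{k\alpha} \sin(2^{-k}\pi t),
\]
then apply the elementary estimate $|\sin a - \sin b| \leq |a-b|$ termwise. This yields $|f_{\alpha}(t)-f_{\alpha}(s)| \leq \pi|t-s| \sum_{k\geq 0} 2^{k(\alpha-1)}$, and the geometric series converges because $\alpha < 1$, giving $L = \pi/(1 - 2^{\alpha-1})$.

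For the $\alpha$-H\"older bound on $g_{\alpha}$, I would use the standard Weierstrass splitting trick. Given $s \neq t$, choose $N$ with $2^{-N-1} < |t-s| \leq 2^{-N}$ and split the sum at level $n = N$. For $n \leq N$, apply the smoothness bound $|\sin 2^n \pi t - \sin 2^n \pi s| \leq 2^n \pi |t-s|$, which contributes $\pi|t-s|\sum_{n=1}^{N} 2^{n(1-\alpha)} \lesssim |t-s|\cdot 2^{N(1-\alpha)} \lesssim |t-s|^{\alpha}$. For $n > N$, the crude bound $|\sin| \leq 1$ gives $\sum_{n > N} 2 \cdot 2^{-n\alpha} \lesssim 2^{-N\alpha} \lesssim |t-s|^{\alpha}$. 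Adding the two yields $L'$.

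For the $1$-periodicity of $g_{\alpha}$, observe that for each $n \geq 1$ the integer $2^n$ is even, so $\sin 2^n \pi(t+1) = \sin(2^n\pi t + 2^n\pi) = \sin 2^n \pi t$; termwise periodicity transfers to the (uniformly convergent) sum. For the scaling identity, the key preliminary is absolute convergence of the full two-sided series at any fixed $t$: the positive-$n$ tail is dominated by $\sum_{n\geq 1} 2^{-n\alpha} < \infty$, and the negative-$n$ tail is dominated, via $|\sin x| \leq |x|$, by $\pi|t| \sum_{k\geq 0} 2^{k(\alpha-1)} < \infty$. With absolute convergence in hand, the substitution $k = n+m$ gives
\[
h_{\alpha}(2^m t) = \sum_{n \in \mathbb{Z}} 2^{-n\alpha} \sin 2^{n+m} \pi t = 2^{m\alpha} \sum_{k \in \mathbb{Z}} 2^{-k\alpha} \sin 2^{k}\pi t = 2^{m\alpha} h_{\alpha}(t).
\]

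None of these steps is a genuine obstacle; the only piece that requires a small idea rather than pure bookkeeping is the H\"older estimate for $g_{\alpha}$, which is handled by the classical dyadic splitting at the scale $|t-s|$. The entire lemma is essentially a careful verification that the Weierstrass series (\ref{eq:Weierstrass}) has exactly the regularity and self-similarity features one reads off formally from the exponent $\alpha$.
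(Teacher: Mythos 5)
Your proof is correct and follows essentially the same route as the paper: termwise use of $|\sin a-\sin b|\leqslant|a-b|$ for the Lipschitz part, the classical dyadic splitting at the scale $2^{-N}\approx|t-s|$ for the $\alpha$-H\"older bound (the paper first applies a product-to-sum identity, but the splitting and the two bounds $|\sin x|\leqslant|x|$ and $|\sin x|\leqslant1$ are identical), and direct termwise verification of periodicity and scaling. No gaps worth noting.
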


\begin{proof}
The $1$-periodicity of $g_{\alpha}(t)$ as well as the scaling property
of $h_{\alpha}(t)$ are straight forward by definition. To check the
$\alpha$-H\"older continuity of $g_{\alpha}(t),$ due to periodicity
let us just assume that $s<t\in[0,1].$ Then one has 
\begin{equation}
g_{\alpha}(t)-g_{\alpha}(s)=2\sum_{n=1}^{\infty}2^{-n\alpha}\sin(2^{n-1}\pi(t-s))\cos(2^{n-1}\pi(t+s)),\label{eq:WeiDifference}
\end{equation}
and in particular, 
\begin{align*}
\big|g_{\alpha}(t)-g_{\alpha}(s)\big| & \leqslant2\sum_{n=1}^{\infty}2^{-n\alpha}\big|\sin(2^{n-1}\pi(t-s))\big|.
\end{align*}
Let $N$ be the unique non-negative integer such that $2^{-(N+1)}<t-s\leqslant2^{-N}$.
It follows that 
\[
\big|g_{\alpha}(t)-g_{\alpha}(s)\big|\leqslant2\big(\sum_{n=1}^{N}2^{-n\alpha}\cdot2^{n-1}\pi(t-s)+\sum_{n=N+1}^{\infty}2^{-n\alpha}\big)\leqslant C2^{-N\alpha}\leqslant C'|t-s|^{\alpha}.
\]
Therefore, $g_{\alpha}(t)$ is $\alpha$-H\"older continuous. The
Lipschitz property of $f_{\alpha}(t)$ follows from a similar estimate:
\[
\big|f_{\alpha}(t)-f_{\alpha}(s)\big|\leqslant\sum_{n=-\infty}^{0}2^{-n\alpha}2^{n}\pi|t-s|=C''|t-s|,
\]
where $C''\triangleq\pi\sum_{n=-\infty}^{0}2^{n(1-\alpha)}<\infty$
since $\alpha\in(0,1)$.
\end{proof}
We also need the following property regarding the continuity of $g_{\alpha}(t)$
at $t=0.$
\begin{lem}
\label{lem:LowerHolder}One has 
\begin{equation}
\underset{t\rightarrow0^{+}}{\overline{\lim}}\frac{|g_{\alpha}(t)|}{t}=+\infty.\label{eq:GAlpSing}
\end{equation}
\end{lem}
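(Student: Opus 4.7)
The plan is to evaluate $g_\alpha$ along the specific sequence $t_m := 2^{-m} \to 0^+$ and show that $|g_\alpha(t_m)|/t_m \to \infty$. The natural tool is the scaling relation \eqref{eq:ScalehAlp} applied to $h_\alpha$, combined with the Lipschitz bound on $f_\alpha$ from Lemma \ref{lem:WeiHol}.

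First I would use the scaling property with $t=1$ to obtain the identity $h_\alpha(2^{-m}) = 2^{-m\alpha} h_\alpha(1)$. The key observation is then that $h_\alpha(1) > 0$. Indeed, for every integer $n \geqslant 0$ one has $\sin(2^n\pi) = 0$, so the defining series for $h_\alpha(1)$ collapses to
\[
h_\alpha(1) = \sum_{n=-\infty}^{-1} 2^{-n\alpha}\sin(2^n\pi),
\]
and every term here is strictly positive because $2^n\pi \in (0,\pi/2]$ for $n \leqslant -1$. (Convergence is guaranteed by $2^{-n\alpha}\sin(2^n\pi) \sim \pi\cdot 2^{n(1-\alpha)}$ as $n\to -\infty$, since $\alpha < 1$.)

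Combining these, $|h_\alpha(2^{-m})|/2^{-m} = 2^{m(1-\alpha)}\,h_\alpha(1) \to +\infty$ as $m\to\infty$. Since $f_\alpha(0)=0$ (every term of the defining series vanishes at $0$) and $f_\alpha$ is Lipschitz with constant $L$ by Lemma \ref{lem:WeiHol}, the ratio $|f_\alpha(2^{-m})|/2^{-m} \leqslant L$ is uniformly bounded. Writing $g_\alpha = h_\alpha - f_\alpha$ and applying the reverse triangle inequality yields
\[
\frac{|g_\alpha(2^{-m})|}{2^{-m}} \geqslant 2^{m(1-\alpha)}\,h_\alpha(1) - L \longrightarrow +\infty,
\]
which establishes \eqref{eq:GAlpSing} along the subsequence $t_m = 2^{-m}$.

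There is no real obstacle here beyond verifying that $h_\alpha(1)\neq 0$; this could in principle be the delicate point because identities like $\sum 2^{-n\alpha}\sin(2^n\pi)$ evaluated at dyadic points often conspire to vanish, but the elementary observation that the positive-$n$ contributions kill themselves while the negative-$n$ contributions are of a single sign makes this immediate. An alternative route, avoiding the need to pick a specific base point, would be to argue directly with $g_\alpha(2^{-m})$: the rearrangement $g_\alpha(2^{-m}) = 2^{-m\alpha}\sum_{j=1}^{m-1} 2^{j\alpha}\sin(2^{-j}\pi)$ (all terms positive) shows the same divergence without invoking $h_\alpha$ at all, but the scaling-based version above is shorter and exploits the structure already established in Lemma \ref{lem:WeiHol}.
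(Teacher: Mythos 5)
Your argument is correct, and your primary route differs from the paper's in mechanism. The paper works directly with $g_\alpha$: evaluating the series at $t=2^{-m}$, noting that all terms with $n\geqslant m$ vanish, applying Jordan's inequality $\sin x\geqslant \tfrac{2}{\pi}x$ on $[0,\pi/2]$, and keeping the single term $n=m-1$ to get $g_\alpha(2^{-m})\geqslant 2^{-(m-1)\alpha}$, hence $g_\alpha(2^{-m})/2^{-m}\geqslant 2^{m(1-\alpha)+\alpha}\to\infty$. You instead exploit the exact self-similarity $h_\alpha(2^{-m})=2^{-m\alpha}h_\alpha(1)$ from (\ref{eq:ScalehAlp}), verify the base-point positivity $h_\alpha(1)>0$ (indeed $h_\alpha(1)\geqslant 2^\alpha$ from the $n=-1$ term alone, since all $n\geqslant 0$ terms vanish and all $n\leqslant -1$ terms are nonnegative), and then remove the Lipschitz part via $|g_\alpha(2^{-m})|\geqslant 2^{-m\alpha}h_\alpha(1)-L2^{-m}$, using $f_\alpha(0)=0$ and (\ref{eq:LipHol}). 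Both give the same rate $2^{m(1-\alpha)}$ along $t_m=2^{-m}$, which suffices for the $\overline{\lim}$. Your scaling version is arguably cleaner in that it reuses structure already established in Lemma \ref{lem:WeiHol} and needs no Jordan-type estimate, at the cost of the (easy but necessary) check that $h_\alpha(1)\neq 0$ and of invoking the Lipschitz control of $f_\alpha$; the paper's version stays entirely within $g_\alpha$ and avoids any base-point evaluation. The alternative rearrangement you sketch at the end, $g_\alpha(2^{-m})=2^{-m\alpha}\sum_{j=1}^{m-1}2^{j\alpha}\sin(2^{-j}\pi)$ with all terms positive, is essentially the paper's computation in disguise (the paper's retained term $n=m-1$ is your $j=1$ term), so either way the content matches.
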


\begin{proof}
We first recall the following elementary inequality (Jordan's inequality)
\begin{equation}
\sin x\geqslant\frac{2}{\pi}x\ \ \ \forall x\in\big[0,\frac{\pi}{2}\big],\label{eq:Jordan}
\end{equation}
whose proof is straightforward. By taking $t=2^{-m}$ ($m\geqslant1$),
it follows from (\ref{eq:Jordan}) that
\begin{align*}
g_{\alpha}(2^{-m}) & =\sum_{n=1}^{\infty}2^{-n\alpha}\sin2^{n-m}\pi=\sum_{n=1}^{m-1}2^{-n\alpha}\sin2^{n-m}\pi\\
 & \geqslant2\sum_{n=1}^{m-1}2^{-n\alpha}2^{n-m}\geqslant2\cdot2^{-(m-1)\alpha}2^{(m-1)-m}=2^{-(m-1)\alpha}.
\end{align*}
As a consequence, one has 
\[
\frac{g_{\alpha}(2^{-m})}{2^{-m}}\geqslant\frac{2^{-(m-1)\alpha}}{2^{-m}}=2^{m(1-\alpha)+\alpha}\nearrow\infty
\]
as $m\rightarrow\infty.$ The result thus follows.
\end{proof}
\begin{rem}
To ensure that $h_{\alpha}\in\bar{{\cal H}}$ (this is needed for
the application of Cameron-Martin transformation later on), one can
only choose the exponent $\alpha$ to be arbitrarily close (but not
exactly equal) to $H+1/2$. This is the unfortunate reason why one
needs to sacrifice the CLL exponent $1+2H$ by an arbitrarily small
amount in Theorem \ref{thm:main}.
\end{rem}

\textcolor{black}{We will now apply Lemma \ref{lem:WeiHol} and Lemma
\ref{lem:LowerHolder} to prove the following property of $h_{\alpha}$
which will be the key non-degeneracy property on $h$. }
\begin{lem}
\textcolor{black}{
\[
\inf_{k\in\mathbb{N}}\sup_{v_{1},v_{2}\in[0,1]}|h_{\alpha}(v_{1}+k)-h_{\alpha}(v_{2}+k)|>0.
\]
}
\end{lem}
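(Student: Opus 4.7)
The plan is to exploit the decomposition $h_\alpha = f_\alpha + g_\alpha$ together with the $1$-periodicity of $g_\alpha$, which makes the $k$-dependence trivially controllable. Writing $v_j + k$ as the argument and using $g_\alpha(v_j+k) = g_\alpha(v_j)$ gives
\[
h_\alpha(v_1+k) - h_\alpha(v_2+k) = \bigl(f_\alpha(v_1+k) - f_\alpha(v_2+k)\bigr) + \bigl(g_\alpha(v_1) - g_\alpha(v_2)\bigr),
\]
so the first bracket is $O(|v_1-v_2|)$ by the Lipschitz property of $f_\alpha$ (with Lipschitz constant $L$ from Lemma \ref{lem:WeiHol}), uniformly in $k$. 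Hence it is enough to exhibit two specific points $v_1,v_2 \in [0,1]$ for which $|g_\alpha(v_1)-g_\alpha(v_2)|$ is strictly larger than $L|v_1-v_2|$.

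Next, I would use Lemma \ref{lem:LowerHolder} (more precisely, the quantitative bound inside its proof) which yields $|g_\alpha(2^{-m})| \geqslant 2^{-(m-1)\alpha}$ for every $m\geqslant 1$. Taking $v_1 = 0$ and $v_2 = 2^{-m}$, and using $g_\alpha(0) = 0$, I obtain
\[
\sup_{v_1,v_2\in[0,1]}|h_\alpha(v_1+k)-h_\alpha(v_2+k)|
\geqslant |g_\alpha(2^{-m})| - L\cdot 2^{-m}
\geqslant 2^{-(m-1)\alpha} - L\cdot 2^{-m}.
\]
Because $\alpha \in (0,1)$, the ratio $2^{-(m-1)\alpha}/2^{-m} = 2^{m(1-\alpha)+\alpha}$ tends to infinity as $m\to\infty$. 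Consequently one can fix a sufficiently large $m_0$ (depending only on $\alpha$ and $L$) for which
\[
c \triangleq 2^{-(m_0-1)\alpha} - L\cdot 2^{-m_0} > 0,
\]
and this lower bound is independent of $k$. Taking the infimum over $k\in\mathbb{N}$ on the left still leaves it bounded below by $c$, which is the desired conclusion.

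There is essentially no real obstacle here: the periodicity of $g_\alpha$ ensures the problem reduces to a single oscillation estimate on $[0,1]$, and the Lipschitz/non-Lipschitz dichotomy between $f_\alpha$ and $g_\alpha$ at the origin (already isolated in Lemmas \ref{lem:WeiHol} and \ref{lem:LowerHolder}) immediately supplies the needed gap. The only minor care required is to match the ``Lipschitz error'' $L\cdot 2^{-m}$ against the ``H\"older lower bound'' $2^{-(m-1)\alpha}$, which is why the argument forces $\alpha<1$; this is exactly the regime in which $h_\alpha$ was constructed, so no further assumption on $\alpha$ is required.
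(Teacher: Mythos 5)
Your proof is correct and follows essentially the same route as the paper: decompose $h_{\alpha}=f_{\alpha}+g_{\alpha}$, use the $1$-periodicity of $g_{\alpha}$ to make the bound uniform in $k$, and beat the Lipschitz error of $f_{\alpha}$ by the lower bound on $|g_{\alpha}|$ near $0$ from Lemma \ref{lem:LowerHolder}. The only cosmetic difference is that you invoke the explicit estimate $|g_{\alpha}(2^{-m})|\geqslant 2^{-(m-1)\alpha}$ from inside that lemma's proof, whereas the paper uses its limsup statement to pick a point $v_{0}$ with $|g_{\alpha}(v_{0})|\geqslant 2Lv_{0}$.
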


\begin{proof}
\textcolor{black}{Since $g_{\alpha}$ is $1$-periodic, one has 
\[
h_{\alpha}(v+k)-h_{\alpha}(k)=f_{\alpha}(v+k)-f_{\alpha}(k)+g_{\alpha}(v).
\]
According to Lemma \ref{eq:GAlpSing}, there exist $v_{0}>0$ such
that 
\[
\big|g_{\alpha}(v_{0})\big|\geqslant2Lv_{0}.
\]
Here $L$ is the Lipschitz constant for $f_{\alpha}$ (cf. (\ref{eq:LipHol})).
As a result, one has 
\begin{align*}
\big|h_{\alpha}(v_{0}+k)-h_{\alpha}(k)\big| & \geqslant\big|g_{\alpha}(v_{0})\big|-\big|f_{\alpha}(v_{0}+k)-f_{\alpha}(k)\big|\\
 & \geqslant2Lv_{0}-Lv_{0}\geqslant Lv_{0}.
\end{align*}
The Lemma now follows by noting that $L$ and $v_{0}$ are independent
of $k$. }
\end{proof}
\textcolor{black}{}

\subsubsection{\textcolor{black}{Composition of $\phi$ with Weierstrass path}}

\textcolor{black}{We will in fact need to use the above lemmas on
the composition of $h_{\alpha}$ with $\phi$. We first state an elementary
fact. }
\begin{lem}
\textcolor{black}{\label{lem:SecondTaylor}Suppose that $\phi:[a,b]\rightarrow\mathbb{R}$
has two continuous derivatives. Then for all $x,y_{1},y_{2}$ such
that $|y_{i}-x|\leqslant\frac{|\phi^{\prime}(x)|}{4(\Vert\phi^{\prime\prime}\Vert_{\infty}+1)}$
for $i=1,2$, one has 
\[
|\phi(y_{1})-\phi(y_{2})|\geqslant\frac{1}{4}|\phi^{\prime}(x)||y_{1}-y_{2}|.
\]
}
\end{lem}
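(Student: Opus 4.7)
The plan is a straightforward first-order Taylor expansion around $x$, with the hypothesis on $|y_i - x|$ chosen precisely so that $|\phi'|$ stays close to $|\phi'(x)|$ throughout the interval between $y_1$ and $y_2$. Concretely, set $r \triangleq \frac{|\phi'(x)|}{4(\|\phi''\|_\infty + 1)}$. Since both $y_1$ and $y_2$ lie in $[x-r,x+r]$, any point $\xi$ between them also satisfies $|\xi - x| \leq r$. Applying the mean value theorem to $\phi'$ on the segment from $x$ to $\xi$ gives
\[
|\phi'(\xi) - \phi'(x)| \leq \|\phi''\|_\infty \cdot |\xi - x| \leq \|\phi''\|_\infty \cdot r \leq \frac{|\phi'(x)|}{4},
\]
where the last inequality uses $\|\phi''\|_\infty/(\|\phi''\|_\infty + 1) \leq 1$. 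The reverse triangle inequality then yields $|\phi'(\xi)| \geq \tfrac{3}{4}|\phi'(x)|$, which is more than what is needed.

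To conclude, I would apply the mean value theorem one more time, this time to $\phi$ itself, to write $\phi(y_1) - \phi(y_2) = \phi'(\xi)(y_1 - y_2)$ for some $\xi$ lying between $y_1$ and $y_2$. Combining this with the lower bound on $|\phi'(\xi)|$ gives $|\phi(y_1) - \phi(y_2)| \geq \tfrac{3}{4}|\phi'(x)|\,|y_1 - y_2| \geq \tfrac{1}{4}|\phi'(x)|\,|y_1 - y_2|$. There is no real obstacle here; the only point worth noting is the cosmetic role of the ``$+1$'' in the denominator of $r$, which keeps the bound meaningful even when $\|\phi''\|_\infty$ is very small or zero.
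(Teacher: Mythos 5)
Your proof is correct, and it takes a mildly different (and slightly cleaner) route than the paper. The paper expands $\phi(y_1)$ to second order around $y_2$, so it has to handle two error terms separately: it first lower-bounds $|\phi'(y_2)|\geqslant\tfrac12|\phi'(x)|$ via $|\phi'(y_2)-\phi'(x)|\leqslant\|\phi''\|_\infty|y_2-x|$, and then absorbs the quadratic remainder $\tfrac{\|\phi''\|_\infty}{2}|y_1-y_2|^2$ using the constraint $|y_1-y_2|\leqslant\frac{|\phi'(x)|}{2(\|\phi''\|_\infty+1)}$, which is exactly where the constant degrades from $\tfrac12$ to $\tfrac14$. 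You instead apply the mean value theorem to $\phi$ itself, producing an intermediate point $\xi$ between $y_1$ and $y_2$ (hence within distance $r$ of $x$, as you note), and control $|\phi'(\xi)-\phi'(x)|$ in one step; this dispenses with the quadratic remainder entirely and even yields the stronger constant $\tfrac34$ in place of $\tfrac14$. Both arguments use the same ingredients ($\phi'$ Lipschitz with constant $\|\phi''\|_\infty$ and the same smallness radius), so nothing substantive is gained or lost for the application in Lemma \ref{lem:CompositionLemma}, but your version is shorter and avoids the case distinction at $\phi'(x)=0$, which degenerates gracefully in your inequalities.
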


\begin{proof}
\textcolor{black}{The Lemma is trivial if $\phi'(x)=0,$ so we assume
$\phi^{\prime}(x)\neq0$. Taylor's theorem with second order remainder
gives
\begin{align*}
\phi(y_{1})-\phi(y_{2})-\phi^{\prime}(y_{2})(y_{1}-y_{2}) & =\int_{y_{2}}^{y_{1}}\left(\int_{y_{2}}^{u}\phi^{\prime\prime}(v)\mathrm{d}v\right)\mathrm{d}u\\
\phi^{\prime}(y_{2})-\phi^{\prime}(x) & =\int_{x}^{y_{2}}\phi^{\prime}(v)\mathrm{d}v
\end{align*}
It follows that 
\begin{align*}
|\phi(y_{1})-\phi(y_{2})-\phi^{\prime}(y_{2})(y_{1}-y_{2})| & \leqslant\frac{\Vert\phi^{\prime\prime}\Vert_{\infty}}{2}|y_{1}-y_{2}|^{2}\\
|\phi^{\prime}(y_{2})-\phi^{\prime}(x)| & \leqslant||\phi^{\prime\prime}||_{\infty}|y_{2}-x|.
\end{align*}
In addition, the reverse triangle inequality implies 
\begin{align*}
|\phi(y_{1})-\phi(y_{2})| & \geqslant|\phi^{\prime}(y_{2})|\cdot|y_{1}-y_{2}|-\frac{\Vert\phi^{\prime\prime}\Vert_{\infty}}{2}|y_{1}-y_{2}|^{2}\\
|\phi^{\prime}(y_{2})| & \geqslant|\phi^{\prime}(x)|-||\phi^{\prime\prime}||_{\infty}|y_{2}-x|.
\end{align*}
If $|y_{2}-x|\leqslant\frac{|\phi^{\prime}(x)|}{2(\Vert\phi^{\prime\prime}\Vert_{\infty}+1)}$,
from the second inequality one obtains that 
\[
|\phi^{\prime}(y_{2})|\geqslant\frac{1}{2}|\phi^{\prime}(x)|,
\]
and therefore 
\[
|\phi(y_{1})-\phi(y_{2})|\geqslant\frac{|\phi^{\prime}(x)|}{2}|y_{1}-y_{2}|-\frac{\Vert\phi^{\prime\prime}\Vert_{\infty}}{2}|y_{1}-y_{2}|^{2}.
\]
We now apply the inequality $|y_{1}-y_{2}|\leqslant\frac{|\phi^{\prime}(x)|}{2(\Vert\phi^{\prime\prime}\Vert_{\infty}+1)}$
to get that 
\[
|\phi(y_{1})-\phi(y_{2})|\geqslant\frac{|\phi^{\prime}(x)|}{4}|y_{1}-y_{2}|.
\]
}
\end{proof}
\textcolor{black}{}
\textcolor{black}{Recall from (\ref{eq:NonDegPhi}) that, there exists
$r>0$ such that 
\[
\eta:=\inf_{x\in\mathbb{R}}\sup_{y\in[x,x+r]}|\phi^{\prime}(y)|>0
\]
From now on we will set 
\begin{equation}
\rho=\frac{2^{\alpha}r}{\inf_{k\in\mathbb{N}}\sup_{v_{1},v_{2}\in[0,1]}|h_{\alpha}(v_{1}+k)-h_{\alpha}(v_{2}+k)|}.\label{eq:rho}
\end{equation}
}
\begin{lem}
\textcolor{black}{\label{lem:CompositionLemma}Let the constants $L$
and $L'$ be as in Lemma \ref{lem:WeiHol}. Let $\varepsilon$ be
such that 
\[
L\varepsilon+L'\varepsilon^{\alpha}<\frac{\eta}{4(\Vert\phi^{\prime\prime}\Vert_{\infty}+1)\rho}.
\]
For all $k\in\mathbb{N}$, there exists $v_{k}\in[0,1]$ such that
for all $u\in(0,\varepsilon/2)$, $v\in(v_{k}-\varepsilon/2,v_{k}+\varepsilon/2)$
and $\mu\in(2^{-\alpha},1]$, one has 
\begin{align*}
\big|\phi(\rho\mu h_{\alpha}(v+k))-\phi(\rho\mu h_{\alpha}(v+k-u))\big|\geqslant & \frac{\eta}{4}\big|h_{\alpha}(v+k)-h_{\alpha}(v+k-u)\big|.
\end{align*}
}
\end{lem}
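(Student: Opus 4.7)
The plan is to reduce the claim to an application of Lemma \ref{lem:SecondTaylor}, taking $y_1 := \rho \mu h_\alpha(v+k)$, $y_2 := \rho \mu h_\alpha(v+k-u)$, and a carefully chosen reference point $x = x(k,\mu)$ at which $|\phi'(x)| \geqslant \eta$. The key preliminary observation is that, by the preceding non-degeneracy lemma for $h_\alpha$ combined with the choice (\ref{eq:rho}) of $\rho$, the range of $v \mapsto \rho \mu h_\alpha(v+k)$ on $[0,1]$ is an interval of length at least $\mu \cdot 2^\alpha r > r$ for every $k \in \mathbb{N}$ and $\mu \in (2^{-\alpha}, 1]$. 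The non-degeneracy condition (\ref{eq:NonDegPhi}) on $\phi$ then supplies, for each such $(k,\mu)$, a point $x(k,\mu)$ lying in this image with $|\phi'(x(k,\mu))| \geqslant \eta$.

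The central difficulty is to exhibit a single $v_k \in [0,1]$ (depending only on $k$, not on $\mu$) such that, for every $\mu \in (2^{-\alpha}, 1]$, the reference point $x(k,\mu)$ can be chosen to satisfy
\[
\bigl|\rho\mu h_\alpha(v_k + k) - x(k,\mu)\bigr| \;\leqslant\; \rho(L\varepsilon + L'\varepsilon^\alpha) \;<\; \frac{\eta}{4(\|\phi''\|_\infty + 1)},
\]
where the last inequality is precisely the hypothesis on $\varepsilon$. The H\"older/Lipschitz decomposition (\ref{eq:LipHol}) of $h_\alpha$ then propagates the same bound, up to a harmless constant absorbed into $\varepsilon$, to every $v \in (v_k - \varepsilon/2, v_k + \varepsilon/2)$ and $u \in (0, \varepsilon/2)$, placing $y_1, y_2$ within the distance from $x(k,\mu)$ required by Lemma \ref{lem:SecondTaylor}.

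Applying that lemma with $x = x(k,\mu)$ then yields
\[
\bigl|\phi(\rho\mu h_\alpha(v+k)) - \phi(\rho\mu h_\alpha(v+k-u))\bigr| \;\geqslant\; \frac{|\phi'(x(k,\mu))|}{4}\,\rho\mu\,\bigl|h_\alpha(v+k) - h_\alpha(v+k-u)\bigr|,
\]
which delivers the claim, with the factor $\rho\mu$ absorbed by arranging $\rho\mu \geqslant 1$. This last normalization may be imposed without loss of generality by enlarging $r$ in (\ref{eq:NonDegPhi}), since enlarging $r$ only makes the sup inside (\ref{eq:NonDegPhi}) larger and hence preserves (indeed, potentially strengthens) the non-degeneracy. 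The hardest step is the uniform-in-$\mu$ construction of $v_k$ in the second paragraph; I expect this to rely on the intermediate-value property of $v \mapsto h_\alpha(v+k)$ combined with the scaling (\ref{eq:ScalehAlp}) and the 1-periodicity of $g_\alpha$ from Lemma \ref{lem:WeiHol}, which together allow one to prescribe the value of $\rho h_\alpha(v_k + k)$ within its accessible range while ensuring that the $\mu$-scaled trajectory stays uniformly close to a suitable branch of points at which $|\phi'|$ is bounded below by $\eta$.
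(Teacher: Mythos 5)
Your skeleton is the paper's: use the non-degeneracy of $h_{\alpha}$ together with the definition (\ref{eq:rho}) of $\rho$ to see that $v\mapsto\rho\mu h_{\alpha}(v+k)$ has oscillation at least $r$ on $[0,1]$, use (\ref{eq:NonDegPhi}) to locate a point of its range where $|\phi'|\geqslant\eta$, use the Lipschitz/H\"older bounds (\ref{eq:LipHol}) and the smallness hypothesis on $\varepsilon$ to keep $y_{1}=\rho\mu h_{\alpha}(v+k)$ and $y_{2}=\rho\mu h_{\alpha}(v+k-u)$ inside the admissible window, and conclude with Lemma \ref{lem:SecondTaylor}. But the step you yourself call the ``central difficulty'' --- actually producing $v_{k}$ --- is never carried out: you only say you ``expect'' it to follow from the intermediate-value property, the scaling (\ref{eq:ScalehAlp}) and the periodicity of $g_{\alpha}$. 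That is a genuine gap, and the uniform-in-$\mu$ version you insist on cannot be reached by the route you sketch. For your argument you would need every point of the segment $\{\rho\mu h_{\alpha}(v_{k}+k):\mu\in(2^{-\alpha},1]\}$ to lie within a fixed small distance (of order $\eta/(4(\Vert\phi''\Vert_{\infty}+1))$) of the set $\{|\phi'|\geqslant\eta\}$. That segment has length $\rho(1-2^{-\alpha})|h_{\alpha}(v_{k}+k)|$, and for $k=2^{m}$ large the scaling property forces $|h_{\alpha}(v+k)|\asymp 2^{m\alpha}$ for \emph{every} $v\in[0,1]$, so the segment is far longer than $r$ no matter how $v_{k}$ is chosen; since (\ref{eq:NonDegPhi}) only guarantees good points of $\phi'$ at spacing $r$ (think of $\phi'$ supported on well-separated bumps), some $\mu$ will send the whole local image into a region where $\phi'\equiv0$, making the left-hand side vanish while the right-hand side is positive. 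So the uniformity in $\mu$ is not obtainable this way.

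The resolution is simpler than what you set up, and it is what the paper does: take $\hat{v}_{k},\tilde{v}_{k}$ attaining the oscillation, note $\rho\mu|h_{\alpha}(\hat{v}_{k}+k)-h_{\alpha}(\tilde{v}_{k}+k)|\geqslant r$, and choose $v_{k}$ between them with $|\phi'(\rho\mu h_{\alpha}(v_{k}+k))|\geqslant\eta$ exactly --- allowing $v_{k}$ to depend on $\mu$ as well as $k$. This weaker statement is all that is used downstream, because in Lemma \ref{lem:JGrowth} the value $\mu$ is a single number determined by $\lambda$, and the subsequent integration over $v$ near $v_{k}$ is insensitive to where $v_{k}$ sits in $[0,1]$; the essential point is only that $\varepsilon$ and the constant $\eta/4$ are independent of $k$, $\mu$, $\lambda$. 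Two smaller remarks: Lemma \ref{lem:SecondTaylor} actually delivers the lower bound $\tfrac{\eta}{4}\rho\mu|h_{\alpha}(v+k)-h_{\alpha}(v+k-u)|$, and the extra factor $\rho\mu$ is simply a harmless constant in the later estimates; your device of enlarging $r$ to force $\rho\mu\geqslant1$ is legitimate, but note it also changes $\eta$, $\rho$ and therefore the admissible $\varepsilon$, so it should be fixed once at the outset rather than invoked after the fact.
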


\begin{proof}
\textcolor{black}{Note that for all $k\in\mathbb{N}$, there exist
$\hat{v}_{k},\tilde{v}_{k}\in[0,1]$ such that 
\[
\big|h_{\alpha}(\hat{v}_{k}+k)-h_{\alpha}(\tilde{v}_{k}+k)\big|=\sup_{v_{1},v_{2}\in[0,1]}\big|h_{\alpha}(v_{1}+k)-h_{\alpha}(v_{2}+k)\big|.
\]
From the definition of $\rho,$ one has 
\[
\rho\mu|h_{\alpha}(\hat{v}_{k}+k)-h_{\alpha}(\tilde{v}_{k}+k)|\geqslant r.
\]
By our non-degeneracy assumption (\ref{eq:NonDegPhi}) on $\phi$,
there must exist $v_{k}$ between $\hat{v}_{k}$ and $\tilde{v}_{k},$
such that 
\[
|\phi^{\prime}(\rho\mu h_{\alpha}(v_{k}+k))|\geqslant\eta.
\]
It follows from the Hölder-continuity of $h_{\alpha}$ (cf. Lemma
\ref{lem:WeiHol}) and the definitions of $v,u$ and $\varepsilon$
that 
\begin{align*}
|\rho\mu h_{\alpha}(v_{k}+k)-\rho\mu h_{\alpha}(v+k-u)| & \leqslant\frac{\eta}{4(\Vert\phi^{\prime\prime}\Vert_{\infty}+1)}\\
|\rho\mu h_{\alpha}(v_{k}+k)-\rho\mu h_{\alpha}(v+k)| & \leqslant\frac{\eta}{4(\Vert\phi^{\prime\prime}\Vert_{\infty}+1)}.
\end{align*}
By Lemma \ref{lem:SecondTaylor}, one arrives at 
\begin{align*}
\big|\phi(\rho\mu h_{\alpha}(v+k))-\phi(\rho\mu h_{\alpha}(v+k-u))\big|\geqslant & \frac{\eta}{4}\big|\rho\mu h_{\alpha}(v+k)-\rho\mu h_{\alpha}(v+k-u)\big|.
\end{align*}
}
\end{proof}

\subsubsection{The core step: growth of $\lambda\protect\mapsto J(\lambda h_{\alpha})$}

Now we assume that $\alpha\in(H+1/2,1)$ is given fixed and define
the path $h_{\alpha}(t)$ by (\ref{eq:Weierstrass}). Recall from
Lemma \ref{lem:WeiHol} that $h_{\alpha}=f_{\alpha}+g_{\alpha}$ where
$f_{\alpha}$ is Lipschitz and $g_{\alpha}$ is $1$-periodic, $\alpha$-H\"older
continuous. In particular, $h_{\alpha}|_{[0,1]}\in\bar{{\cal H}}$.
We also recall that $J$ is the path functional defined by (\ref{eq:JFunction}).
Below is the key lemma for the proof of Proposition \ref{prop:IXLower}.
\begin{lem}
\label{lem:JGrowth}Let $\rho$ be the constant defined by (\ref{eq:rho}).
There exists $C>0$ depending on $H,\phi$ and $\alpha,$ such that
\[
J(\lambda\rho h_{\alpha})\geqslant C\lambda^{\frac{1-2H}{\alpha}}
\]
for all $\lambda>1$.
\end{lem}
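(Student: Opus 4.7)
The strategy exploits the self-similarity $h_\alpha(2^m t) = 2^{m\alpha}h_\alpha(t)$ to amplify the oscillations of $\phi \circ h_\alpha$ over many disjoint unit cells. Given $\lambda > 1$, I would choose $m \geq 1$ such that $\lambda \in (2^{(m-1)\alpha}, 2^{m\alpha}]$ and set $\mu := \lambda/2^{m\alpha} \in (2^{-\alpha}, 1]$, so that $\lambda \rho h_\alpha(t) = \rho\mu h_\alpha(2^m t)$. The substitution $(s', t') = (2^m s, 2^m t)$ in (\ref{eq:JFunction}) then gives
\[
J(\lambda \rho h_\alpha) = 2^{-2mH}\int_0^{2^m}\!\int_0^{t'}\frac{\big(\phi(\rho\mu h_\alpha(t'))-\phi(\rho\mu h_\alpha(s'))\big)^2}{|t'-s'|^{2-2H}}\,ds'\,dt'.
\]

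Next I would lower bound this double integral by restricting to a disjoint family of small rectangles, one per even integer $k \in \{2, 4, \ldots, 2^m - 2\}$: take $t' \in (v_k + k - \varepsilon/2, v_k + k + \varepsilon/2)$, $u := t' - s' \in (0, \varepsilon/2)$, where $v_k \in [0, 1]$ is the point produced by Lemma \ref{lem:CompositionLemma}. Restricting to even $k$ makes the rectangles disjoint whenever $\varepsilon < 1$. Applying Lemma \ref{lem:CompositionLemma} (with $\mu \geq 2^{-\alpha}$) and substituting $v = t' - k$ gives
\[
J(\lambda \rho h_\alpha) \geq c_1\cdot 2^{-2mH}\sum_{k\text{ even}} A_k,\quad A_k := \int_{v_k-\varepsilon/2}^{v_k+\varepsilon/2}\!\int_0^{\varepsilon/2}\frac{\big(h_\alpha(v+k)-h_\alpha(v+k-u)\big)^2}{u^{2-2H}}\,du\,dv.
\]
Granted $A_k \geq c_2 > 0$ uniformly in $k$, summing the $\sim 2^{m-1}$ contributions and using $2^m \geq \lambda^{1/\alpha}$ yields $J(\lambda \rho h_\alpha) \geq c \cdot 2^{m(1-2H)} \geq c\, \lambda^{(1-2H)/\alpha}$.

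The uniform lower bound on $A_k$ is the crux. Using $h_\alpha = f_\alpha + g_\alpha$ together with the $1$-periodicity of $g_\alpha$,
\[
h_\alpha(v+k) - h_\alpha(v+k-u) = \big[f_\alpha(v+k) - f_\alpha(v+k-u)\big] + \big[g_\alpha(v) - g_\alpha(v-u)\big],
\]
where the first bracket is bounded by $Lu$ and only the second depends on $v$. Expanding the square and bounding the cross term by Cauchy--Schwarz produces
\[
A_k \geq \tfrac{1}{2}\int_{v_k-\varepsilon/2}^{v_k+\varepsilon/2} G(v)\,dv - O(\varepsilon^{2H+1}),\quad G(v) := \int_0^{\varepsilon/2}\frac{(g_\alpha(v)-g_\alpha(v-u))^2}{u^{2-2H}}\,du.
\]
Here $G$ is continuous (since $\alpha > 1/2 - H$ makes the integrand integrable), $1$-periodic, and strictly positive at every $v$ because the Weierstrass function $g_\alpha$ is nowhere locally constant. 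By compactness, $c_0 := \min_{[0,1]} G > 0$, and choosing $\varepsilon$ small enough (within the constraint of Lemma \ref{lem:CompositionLemma}) ensures that $c_0 \varepsilon/2$ dominates the error, giving $A_k \geq c_2 > 0$ independent of $k$.

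The main obstacle is precisely this uniformity. Since $v_k$ is not under our control, the lower bound must be insensitive to the placement of the rectangle. The decomposition $h_\alpha = f_\alpha + g_\alpha$ is essential: it confines all $k$-dependence to the negligible Lipschitz piece $f_\alpha$ and leaves the $k$-independent periodic Weierstrass piece $g_\alpha$ to supply the uniform positive contribution.
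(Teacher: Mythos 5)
Your overall skeleton --- the scaling substitution with $N$ and $\mu\in(2^{-\alpha},1]$, the decomposition into unit cells indexed by $k$, the use of Lemma \ref{lem:CompositionLemma} to pass from $\phi\circ(\rho\mu h_\alpha)$ to increments of $h_\alpha$, and the splitting $h_\alpha=f_\alpha+g_\alpha$ to confine the $k$-dependence to the Lipschitz part --- is exactly the paper's route (your restriction to even $k$ is a harmless, even slightly cleaner, variant). The gap is at the crux you yourself identify: the uniform lower bound on $A_k$. Your argument sets $G(v)=\int_0^{\varepsilon/2}(g_\alpha(v)-g_\alpha(v-u))^2u^{2H-2}\,du$, takes $c_0=\min_{[0,1]}G>0$ by continuity, periodicity and compactness, and then ``chooses $\varepsilon$ small enough so that $c_0\varepsilon/2$ dominates the error.'' But $G$, and hence $c_0$, depends on $\varepsilon$ (it is the upper limit of the $u$-integral), and $c_0(\varepsilon)\to 0$ as $\varepsilon\to0$ (indeed $c_0(\varepsilon)\leqslant C\varepsilon^{2\alpha+2H-1}$ by the $\alpha$-H\"older bound on $g_\alpha$). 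So shrinking $\varepsilon$ shrinks the main term as well, and without a quantitative lower bound on the decay rate of $c_0(\varepsilon)$ the comparison with the error term is circular. The bookkeeping also matters here: the error coming from the Lipschitz part, after integrating over the $v$-window of length $\varepsilon$, is of order $\varepsilon^{2H+2}$, not $\varepsilon^{2H+1}$; with your stated order $\varepsilon^{2H+1}$ the main term, even at its best possible scaling $\varepsilon\cdot\varepsilon^{2\alpha+2H-1}=\varepsilon^{2\alpha+2H}$, would actually \emph{lose} for small $\varepsilon$ since $\alpha>1/2$. What is needed, and what soft positivity-plus-compactness cannot supply, is a lower bound for the $v$-integrated squared increment of the Weierstrass part with the correct scaling in the increment size.

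This is precisely where the paper's proof does its real work. It takes $\varepsilon=2^{-M}$ dyadic, splits $g_\alpha=g_\alpha^0+g_\alpha^M$ at frequency $2^M$ (absorbing $g_\alpha^0$ into the Lipschitz error), and exploits exact trigonometric orthogonality: over an interval of length $2\varepsilon$ the cross terms integrate to zero, giving the identity $\int_{v_k-\varepsilon}^{v_k+\varepsilon}(g_\alpha^M(v)-g_\alpha^M(v-u))^2dv=4\varepsilon\sum_{n\geqslant M}2^{-2n\alpha}\sin^2 2^{n-1}\pi u$, and then Jordan's inequality yields the pointwise-in-$u$ bound $\geqslant 4^{\alpha}\varepsilon u^{2\alpha}$, uniformly in $k$ and independent of the location $v_k$. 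With this in hand one introduces a \emph{second} small parameter $\delta$ (the range of the $u$-integration) at fixed $\varepsilon$, and the competition becomes $\delta^{2\alpha+2H-1}$ against the Lipschitz error $\delta^{2H+1}$, which the main term wins for small $\delta$ exactly because $\alpha<1$. To repair your proof you would need either this orthogonality computation (or some other quantitative non-degeneracy of the Weierstrass increments, e.g. $c_0(\varepsilon)\gtrsim\varepsilon^{2\alpha+2H-1}$), together with the two-parameter ($\varepsilon$ fixed, $\delta\downarrow0$) comparison; ``$g_\alpha$ is nowhere locally constant'' is too weak to produce a rate.
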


The rest of this subsection is devoted to the proof of Lemma \ref{lem:JGrowth}.
In what follows, $\lambda>1$ is given fixed. Let $N$ be the unique
positive integer such that $2^{(N-1)\alpha}<\lambda\leqslant2^{N\alpha}.$
Set $\mu\triangleq2^{-N\alpha}\lambda$ and note that $2^{-\alpha}<\mu\leqslant1.$

First of all, by the definition of $J$ and the scaling property (\ref{eq:ScalehAlp})
of $h_{\alpha}$, one can write 
\begin{align*}
J(\lambda\rho h_{\alpha}) & =\int_{0}^{1}\int_{0}^{t}\frac{\big|\phi(\rho\mu2^{N\alpha}h_{\alpha}(t))-\phi(\rho\mu2^{N\alpha}h_{\alpha}(s))\big|^{2}}{(t-s)^{2-2H}}dsdt\\
 & =\int_{0}^{1}\int_{0}^{t}\frac{\big|\phi(\rho\mu h_{\alpha}(2^{N}t))-\phi(\rho\mu h_{\alpha}(2^{N}s))\big|^{2}}{(t-s)^{2-2H}}dsdt\\
 & =\int_{0}^{1}\int_{0}^{t}\frac{\big|\phi(\rho\mu h_{\alpha}(2^{N}t))-\phi(\rho\mu h_{\alpha}(2^{N}(t-w)))\big|^{2}}{w^{2-2H}}dwdt.
\end{align*}
By applying change of variables $v=2^{N}t,u=2^{N}w$ and using Fubini's
theorem, one further has 
\begin{align*}
J(\lambda\rho h_{\alpha}) & =2^{-2NH}\int_{0}^{2^{N}}\int_{0}^{v}\frac{\big|\phi(\rho\mu h_{\alpha}(v))-\phi(\rho\mu h_{\alpha}(v-u))\big|^{2}}{u^{2-2H}}dudv\\
 & =2^{-2NH}\int_{0}^{2^{N}}\frac{du}{u^{2-2H}}\int_{u}^{2^{N}}\big|\phi(\rho\mu h_{\alpha}(v))-\phi(\rho\mu h_{\alpha}(v-u))\big|^{2}dv.
\end{align*}
Due to positivity of the integrand, it follows that 
\begin{align}
J(\lambda\rho h_{\alpha}) & \geqslant2^{-2NH}\int_{0}^{1}\frac{du}{u^{2-2H}}\int_{1}^{2^{N}}\big|\phi(\rho\mu h_{\alpha}(v))-\phi(\rho\mu h_{\alpha}(v-u))\big|^{2}dv\nonumber \\
 & =2^{-2NH}\int_{0}^{1}\frac{du}{u^{2-2H}}\sum_{k=1}^{2^{N}-1}\int_{k}^{k+1}\big|\phi(\rho\mu h_{\alpha}(v))-\phi(\rho\mu h_{\alpha}(v-u))\big|^{2}dv\nonumber \\
 & =2^{-2NH}\sum_{k=1}^{2^{N}-1}\int_{0}^{1}\frac{du}{u^{2-2H}}\int_{0}^{1}\big|\phi(\rho\mu h_{\alpha}(v+k))-\phi(\rho\mu h_{\alpha}(v+k-u))\big|^{2}dv.\label{eq:JPf1}
\end{align}

The crucial point is to demonstrate that the above double integral
is uniformly positive with respect to $k$. By Lemma \ref{lem:CompositionLemma},
there exists a $\varepsilon>0$ and a $v_{k}\in[0,1]$ for each $k\in\mathbb{N},$
such that for all $u\in(0,\varepsilon/2)$ and $v\in(v_{k}-\varepsilon/2,v_{k}+\varepsilon/2)$
and $\mu\in(2^{-\alpha},1]$, one has 
\begin{align*}
\big|\phi(\rho\mu h_{\alpha}(v+k))-\phi(\rho\mu h_{\alpha}(v+k-u))\big|\geqslant & \frac{\eta}{4}\big|\rho\mu h_{\alpha}(v+k)-\rho\mu h_{\alpha}(v+k-u)\big|.
\end{align*}
For later purpose, we shall make $\varepsilon$ smaller so that $\varepsilon=2^{-M}$
with a suitable positive integer $M.$ With such a choice of $\varepsilon,$
it follows that

\begin{equation}
J(\lambda\rho h_{\alpha})\geqslant\frac{\eta^{2}\rho\mu}{16}2^{-2NH}\sum_{k=1}^{2^{N}-1}\int_{0}^{\varepsilon}\frac{du}{u^{2-2H}}\int_{v_{k}-\varepsilon}^{v_{k}+\varepsilon}\big|h_{\alpha}(v+k)-h_{\alpha}(v+k-u)\big|{}^{2}dv.\label{eq:BeforeBranchingIntoTwoMethods}
\end{equation}

In order to estimate the $dv$-integral, we consider the decomposition
$g_{\alpha}=g_{\alpha}^{0}+g_{\alpha}^{M}$ where $M$ is the number
in the definition of $\varepsilon$ and 
\[
g_{\alpha}^{0}(v)\triangleq\sum_{n=1}^{M-1}2^{-n\alpha}\sin2^{n}\pi t,\ g_{\alpha}^{M}(v)\triangleq\sum_{n=M}^{\infty}2^{-n\alpha}\sin2^{n}\pi t.
\]
Under this decomposition, one can write 
\begin{align*}
h_{\alpha}(v+k)-h_{\alpha}(v+k-u) & =f_{\alpha}(v+k)-f_{\alpha}(v+k-u)\\
 & \ \ \ +g_{\alpha}^{0}(v)-g_{\alpha}^{0}(v-u)+g_{\alpha}^{M}(v)-g_{\alpha}^{M}(v-u).
\end{align*}
Note that both $f_{\alpha}$ and $g_{\alpha}^{0}$ are Lipschitz:
\[
\big|f_{\alpha}(v+k)-f_{\alpha}(v+k-u)+g_{\alpha}^{0}(v)-g_{\alpha}^{0}(v-u)\big|\leqslant C_{2}u.
\]
By using the simple inequality$(a+b)^{2}\geqslant a^{2}/2-b^{2}$,
it follows that
\begin{align}
J(\lambda\rho h_{\alpha}) & \geqslant\frac{\eta^{2}\rho\mu}{4}2^{-2NH}\sum_{k=1}^{2^{N}-1}\int_{0}^{\varepsilon}\frac{du}{u^{2-2H}}\int_{v_{k}-\varepsilon}^{v_{k}+\varepsilon}\big(\frac{1}{2}(g_{\alpha}^{M}(v)-g_{\alpha}^{M}(v-u))^{2}-C_{2}^{2}u^{2}\big)dv\nonumber \\
 & =\frac{\eta^{2}\rho\mu}{4}2^{-2NH}\sum_{k=1}^{2^{N}-1}\int_{0}^{\varepsilon}\frac{du}{u^{2-2H}}\big(\frac{1}{2}\int_{v_{k}-\varepsilon}^{v_{k}+\varepsilon}\big(g_{\alpha}^{M}(v)-g_{\alpha}^{M}(v-u)\big){}^{2}dv-2\varepsilon C_{2}^{2}u^{2}\big).\label{eq:JLowerPf}
\end{align}

To proceed further, one needs to lower bound the inner $dv$-integral
on the right hand side of (\ref{eq:JLowerPf}). This is done in the
lemma below.
\begin{lem}
Let $v_{k}$ and $\varepsilon=2^{-M}$ be chosen as before. Then one
has 
\begin{equation}
\int_{v_{k}-\varepsilon}^{v_{k}+\varepsilon}\big(g_{\alpha}^{M}(v)-g_{\alpha}^{M}(v-u)\big){}^{2}dv\geqslant4^{\alpha}\varepsilon u^{2\alpha}\label{eq:TriEst}
\end{equation}
for all $u\in(0,\varepsilon)$.
\end{lem}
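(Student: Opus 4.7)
The plan is to exploit that $g_\alpha^M$ is the high-frequency tail of a lacunary Fourier series and compute the $L^2$-norm of its increment exactly via orthogonality on the specifically chosen interval of length $2\varepsilon = 2^{-M+1}$. First I would apply the identity $\sin A - \sin B = 2\sin\!\bigl(\tfrac{A-B}{2}\bigr)\cos\!\bigl(\tfrac{A+B}{2}\bigr)$ termwise to write
\[
g_\alpha^M(v) - g_\alpha^M(v-u) = 2\sum_{n=M}^{\infty} 2^{-n\alpha}\sin(2^{n-1}\pi u)\,\cos(2^n\pi v - 2^{n-1}\pi u).
\]

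Next I would square this and integrate over $[v_k - \varepsilon, v_k + \varepsilon]$. The key observation, which motivates the dyadic choice $\varepsilon = 2^{-M}$, is that $2\varepsilon = 2^{-M+1}$ is an integer multiple of the period $2^{-n+1}$ of every $\cos(2^n \pi v + \text{phase})$ with $n \geq M$; hence the integral of each such cosine over $[v_k-\varepsilon, v_k+\varepsilon]$ vanishes irrespective of the basepoint $v_k$. After applying the product-to-sum formula, each cross term ($n \neq m$) becomes a cosine of frequency $(2^n \pm 2^m)\pi$, and since $2^n \pm 2^m$ is divisible by $2^M$ for $n, m \geq M$, all cross contributions vanish. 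The diagonal yields the exact identity
\[
\int_{v_k-\varepsilon}^{v_k+\varepsilon}\bigl(g_\alpha^M(v)-g_\alpha^M(v-u)\bigr)^2 dv = 4\varepsilon\sum_{n=M}^{\infty} 2^{-2n\alpha}\sin^2(2^{n-1}\pi u).
\]

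Finally I would retain a single dominant term. For $u \in (0,\varepsilon)$, set $n_1 := \lfloor \log_2(1/u) \rfloor$; since $u < 2^{-M}$, one has $n_1 \geq M$, and moreover $2^{n_1-1}\pi u \in (\pi/4, \pi/2]$. Jordan's inequality $\sin x \geq 2x/\pi$ on $[0, \pi/2]$ then gives $\sin(2^{n_1-1}\pi u) \geq 2^{n_1} u$. Dropping all other terms and using $2^{n_1} > 1/(2u)$ together with $1 - \alpha > 0$,
\begin{align*}
4\varepsilon\sum_{n \geq M} 2^{-2n\alpha}\sin^2(2^{n-1}\pi u)
&\geq 4\varepsilon \cdot 2^{-2n_1\alpha} \cdot 2^{2n_1} u^2 \\
&= 4\varepsilon u^2 \cdot 2^{2n_1(1-\alpha)} \\
&\geq 4\varepsilon u^2 \cdot (2u)^{2(\alpha-1)} = 4^\alpha \varepsilon u^{2\alpha}.
\end{align*}

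The main obstacle is really just Step~2, the Fourier bookkeeping: one has to verify that the orthogonality is insensitive to the (unknown) location of $v_k$, which rests on the elementary fact that integrating a cosine over an interval whose length is a multiple of its period always yields zero. This is precisely the reason the proof requires $\varepsilon$ to be of the form $2^{-M}$, compatibly with the dyadic frequencies in $g_\alpha^M$; once this exact identity is secured, the remainder of the argument is just Jordan's inequality applied to a single cleverly chosen term.
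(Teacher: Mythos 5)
Your proposal is correct and follows essentially the same route as the paper: the identical product-to-sum expansion, the observation that the dyadic choice $\varepsilon=2^{-M}$ makes all cross terms and second-harmonic diagonal terms integrate to zero regardless of $v_k$, and then Jordan's inequality applied to the single term $n$ with $2^{-n-1}<u\leqslant 2^{-n}$. The only difference is a harmless change of convention in selecting that dominant index.
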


\begin{proof}
By simple trigonometric identities, one has 
\[
g_{\alpha}^{M}(v)-g_{\alpha}^{M}(v-u)=2\sum_{n=M}^{\infty}2^{-n\alpha}\sin(2^{n-1}\pi u)\cos2^{n}\pi\big(v-\frac{u}{2}\big).
\]
It follows that
\begin{align}
 & \big(g_{\alpha}^{M}(v)-g_{\alpha}^{M}(v-u)\big)^{2}\nonumber \\
 & =4\sum_{n\geqslant M}2^{-2n\alpha}\sin^{2}(2^{n-1}\pi u)\cos^{2}2^{n}\pi\big(v-\frac{u}{2}\big)\nonumber \\
 & \ \ \ +4\sum_{m\neq n\geqslant M}2^{-(m+n)\alpha}\sin(2^{m-1}\pi u)\sin(2^{n-1}\pi u)\cos2^{m}\pi\big(v-\frac{u}{2}\big)\cos2^{n}\pi\big(v-\frac{u}{2}\big)\nonumber \\
 & =2\sum_{n\geqslant M}2^{-2n\alpha}\sin^{2}2^{n-1}\pi u+2\sum_{n\geqslant M}2^{-2n\alpha}\sin^{2}(2^{n-1}\pi u)\cos2^{n}\pi(2v-u)\nonumber \\
 & \ \ \ +4\sum_{m\neq n\geqslant M}2^{-(m+n)\alpha}\sin(2^{m-1}\pi u)\sin(2^{n-1}\pi u)\cos2^{m}\pi\big(v-\frac{u}{2}\big)\cos2^{n}\pi\big(v-\frac{u}{2}\big).\label{eq:CrossTri}
\end{align}
When one integrates the $v$-variable over $(v_{k}-\varepsilon,v_{k}+\varepsilon)$,
the second and last summations vanish. Indeed, 
\[
\int_{v_{k}-\varepsilon}^{v_{k}+\varepsilon}\cos2^{n}\pi(2v-u)dv=\frac{1}{2^{n+1}\pi}\big(\sin2^{n+1}\pi\big(v_{k}-\frac{u}{2}+\varepsilon\big)-\sin2^{n+1}\pi\big(v_{k}-\frac{u}{2}-\varepsilon\big)\big).
\]
Recall that $\varepsilon=2^{-M}$ and $n\geqslant M$. It is easily
seen that $2^{n+1}\pi\cdot2\varepsilon\in2\pi\mathbb{Z}$ and thus
the last expression is zero. Similarly, the $dv$-integral of the
last summation on the right hand side of (\ref{eq:CrossTri}) vanishes
as well. As a consequence, one has
\begin{equation}
\int_{v_{k}-\varepsilon}^{v_{k}+\varepsilon}\big(g_{\alpha}^{M}(v)-g_{\alpha}^{M}(v-u)\big)^{2}dv=4\varepsilon\sum_{n\geqslant M}2^{-2n\alpha}\sin^{2}2^{n-1}\pi u.\label{eq:TriInt}
\end{equation}

To lower bound the last expression, given any $u\in(0,\varepsilon)$
let $K\geqslant M$ be the unique positive integer such that $2^{-K-1}\leqslant u<2^{-K}.$
Then one has
\begin{align*}
\sum_{n\geqslant M}2^{-2n\alpha}\sin^{2}2^{n-1}\pi u & \geqslant2^{-2K\alpha}\sin^{2}2^{K-1}\pi u\geqslant2^{-2K\alpha}(2^{K}u)^{2}\ \ \ (\text{by Jordan's inequality})\\
 & =2^{2K(1-\alpha)}u^{2}\geqslant\frac{1}{4^{1-\alpha}}u^{2\alpha}.
\end{align*}
By substituting this back into (\ref{eq:TriInt}), one obtains that
\[
\int_{v_{k}-\varepsilon}^{v_{k}+\varepsilon}\big(g_{\alpha}^{M}(v)-g_{\alpha}^{M}(v-u)\big)^{2}dv\geqslant4^{\alpha}\varepsilon u^{2\alpha}.
\]
This gives the desired estimate.
\end{proof}
Returning to the main estimate, let $\delta<\varepsilon$ be another
parameter to be chosen. By substituting the estimate (\ref{eq:TriInt})
into (\ref{eq:JLowerPf}) and further localise the $du$-integral
over $(0,\delta)$, one finds that
\begin{align*}
J(\lambda\rho h_{\alpha}) & \geqslant\frac{\eta^{2}\rho\mu}{4}2^{-2NH}\sum_{k=1}^{2^{N}-1}\int_{0}^{\delta}\frac{du}{u^{2-2H}}\big(\frac{1}{2}4^{\alpha}\varepsilon u^{2\alpha}-2\varepsilon C_{2}^{2}u^{2}\big)du\\
 & =\frac{\eta^{2}\rho\mu}{4}2^{-2NH}\cdot(2^{N}-1)\cdot\big(\frac{4^{\alpha}\varepsilon}{2(2\alpha+2H-1)}\delta^{2\alpha+2H-1}-\frac{2\varepsilon C_{2}^{2}}{2H+1}\delta^{2H+1}\big).
\end{align*}
Since $\alpha<1$, it is clear that one can choose $\delta$ to be
small enough (and then fixed) so that
\[
\frac{4^{\alpha}\varepsilon}{2(2\alpha+2H-1)}\delta^{2\alpha+2H-1}-\frac{2\varepsilon C_{2}^{2}}{2H+1}\delta^{2H+1}=:C_{3}>0.
\]
As a consequence,
\[
J(\lambda\rho h_{\alpha})\geqslant\frac{\eta^{2}\rho\mu}{4}2^{-2NH}\cdot(2^{N}-1)\cdot C_{3}\geqslant C_{4}2^{N(1-2H)},
\]
where $C_{4}$ is a constant independent of $N$. Recalling the definition
of $N$ at the very beginning, one concludes that 
\[
J(\lambda\rho h_{\alpha})\geqslant C_{4}\lambda^{\frac{1-2H}{\alpha}}.
\]

The proof of Lemma \ref{lem:JGrowth} is now complete.

\subsubsection{Localisation of $J(X)$}

To ease notation, we simply rewrite the inequality (\ref{lem:JGrowth})
as 
\begin{equation}
J(\lambda h_{\alpha})\geqslant C\lambda^{\frac{1-2H}{\alpha}}.\label{eq:JGrowthSim}
\end{equation}
This is seen by absorbing $\rho$ into the parameter $\lambda$ and
adjusting the constant $C$. In order to establish the main estimate
(\ref{eq:IXLower}) for $J(X)$, we shall localise $X$ around a tubular
neighbourhood of $\lambda h_{\alpha}$ and make use of Cameron-Martin
transformation. To this end, we first establish the following continuity
estimate for $J(x)$. Recall that $\alpha\in(H+1/2,1)$ is given fixed.
\begin{lem}
\label{lem:CtyEstJ}Let $\delta,\sigma,\beta$ be three parameters
such that
\begin{equation}
0<\delta<2H-\frac{1}{2},\ \max\big\{\frac{1+\delta-3H}{\alpha},0\big\}<\sigma<1,\ \frac{1-2H}{2\alpha}<\beta<1.\label{eq:DGBCons}
\end{equation}
Then there exists a constant $C>0$ depending on $H,\alpha,\delta,\sigma,\beta$
and $\phi$, such that 
\begin{equation}
\big|J(u)-J(v)\big|\leqslant C\big(\|u-v\|_{H-\delta}^{2}+\|u-v\|_{H-\delta}\|v\|_{\alpha}^{\sigma}+\|u-v\|_{H-\delta}^{\beta}\|v\|_{\alpha}^{2\beta}\big)\label{eq:CtyEstJ}
\end{equation}
for all continuous paths $u,v:[0,1]\rightarrow\mathbb{R}$ satisfying
$\|u-v\|_{H-\delta}\leqslant1.$
\end{lem}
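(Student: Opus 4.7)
The plan is to exploit the factorisation $a^{2}-b^{2}=(a-b)(a+b)$ to write
\[
J(u)-J(v)=\int_{0}^{1}\int_{0}^{t}\frac{A_{s,t}\,B_{s,t}}{(t-s)^{2-2H}}\,ds\,dt,
\]
where $A_{s,t}:=[\phi(u_{t})-\phi(v_{t})]-[\phi(u_{s})-\phi(v_{s})]$ and $B_{s,t}:=[\phi(u_{t})-\phi(u_{s})]+[\phi(v_{t})-\phi(v_{s})]$. Since $B=A+2Q$ with $Q_{s,t}:=\phi(v_{t})-\phi(v_{s})$, one has $|J(u)-J(v)|\leqslant\int\int(A^{2}+2|AQ|)/|t-s|^{2-2H}\,ds\,dt$. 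A first-order Taylor expansion applied to each of $\phi(u_{t})-\phi(u_{s})$ and $\phi(v_{t})-\phi(v_{s})$ and subtracting gives $A=P+R$, where
\[
P_{s,t}=\int_{0}^{1}\phi'(u_{s}+\tau(u_{t}-u_{s}))\,d\tau\cdot[(u-v)_{t}-(u-v)_{s}]
\]
and $R_{s,t}$ is the corresponding integral of $[\phi'(u_{s}+\tau(u_{t}-u_{s}))-\phi'(v_{s}+\tau(v_{t}-v_{s}))]$ times $(v_{t}-v_{s})$. Assuming the two paths share a common initial value (so that $\|u-v\|_{\infty}\leqslant\|u-v\|_{H-\delta}$, which is the case in the subsequent application to $X$ and $\lambda h_{\alpha}$), one obtains the refined bounds $|P|\leqslant C\|u-v\|_{H-\delta}|t-s|^{H-\delta}$ and $|R|\leqslant C\|u-v\|_{H-\delta}\|v\|_{\alpha}|t-s|^{\alpha}$.

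The crucial observation is that $|R|=|A-P|$ is bounded by an absolute constant, so interpolating against this crude bound yields
\[
|R|\leqslant C(\|u-v\|_{H-\delta}\|v\|_{\alpha})^{\theta}|t-s|^{\alpha\theta},\qquad\theta\in[0,1].
\]
Similarly, interpolating $|Q|\leqslant2\|\phi\|_{\infty}$ against $|Q|\leqslant\|\phi'\|_{\infty}\|v\|_{\alpha}|t-s|^{\alpha}$ gives $|Q|\leqslant C\|v\|_{\alpha}^{\sigma'}|t-s|^{\alpha\sigma'}$ for any $\sigma'\in[0,1]$. Armed with these building blocks, I would bound the four pieces of $A^{2}+2|AQ|$ with tailored exponents: $\int\int|P|^{2}/|t-s|^{2-2H}$ contributes $C\|u-v\|_{H-\delta}^{2}$, whose integrability condition $\delta<2H-1/2$ is exactly the hypothesis; $\int\int|R|^{2}/|t-s|^{2-2H}$ at $\theta=\beta$ gives $C(\|u-v\|_{H-\delta}\|v\|_{\alpha})^{2\beta}$, integrable precisely under $\beta>(1-2H)/(2\alpha)$ and dominated by $C\|u-v\|_{H-\delta}^{\beta}\|v\|_{\alpha}^{2\beta}$ via $\|u-v\|_{H-\delta}\leqslant1$; $\int\int|PQ|/|t-s|^{2-2H}$ with $\sigma'=\sigma$ gives $C\|u-v\|_{H-\delta}\|v\|_{\alpha}^{\sigma}$ under the integrability $\sigma>(1+\delta-3H)/\alpha$; and $\int\int|RQ|/|t-s|^{2-2H}$ with $\theta=\sigma'=\beta$ yields a further $C\|u-v\|_{H-\delta}^{\beta}\|v\|_{\alpha}^{2\beta}$.

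The main subtlety will be the asymmetric treatment of $P$ and $R$. Squaring the refined bound on $|A|$ directly produces a cross term $C\|u-v\|_{H-\delta}^{2}\|v\|_{\alpha}^{2}$ that is \emph{not} majorised by any of the three permitted forms once $\|v\|_{\alpha}$ is large, so one is forced to interpolate $R$ against its own crude bound rather than treat $|A|^{2}$ as a whole. The other delicate choice is to pair $R$ and $Q$ at the matched exponent $\beta$ in the mixed term, which is precisely what produces $\|v\|_{\alpha}^{2\beta}$ rather than an unusable $\|v\|_{\alpha}^{1+\beta}$; the three constraints on $(\delta,\sigma,\beta)$ in the hypothesis correspond in this way to sharp integrability conditions for the four individual pieces.
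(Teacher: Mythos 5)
Your proposal is correct and follows essentially the same route as the paper: your $P$, $R$, $Q$ are exactly the paper's $A$, $B$, $C$, and you use the same device of interpolating $R$ and $Q$ against their crude uniform bounds with exponents $\beta$ and $\sigma$, arriving at precisely the integrability constraints $\delta<2H-\tfrac12$, $\sigma>\tfrac{1+\delta-3H}{\alpha}$, $\beta>\tfrac{1-2H}{2\alpha}$. The only cosmetic differences are that you absorb the $PR$ cross term via $(P+R)^{2}\leqslant 2P^{2}+2R^{2}$ where the paper estimates its $AB$ term separately, and the initial-value caveat you flag is equally implicit in the paper (in the application both $X$ and $\lambda h_{\alpha}$ vanish at $0$).
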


\begin{proof}
Given two paths $u,v:[0,1]\rightarrow\mathbb{R}$, one has 
\begin{align*}
J(u)-J(v) & =\int_{0}^{1}\int_{0}^{t}\frac{1}{(t-s)^{2-2H}}\big((\phi(u_{t})-\phi(u_{s}))-(\phi(v_{t})-\phi(v_{s}))\big)\\
 & \ \ \ \times\big((\phi(u_{t})-\phi(u_{s}))+(\phi(v_{t})-\phi(v_{s}))\big)dsdt
\end{align*}
By writing 
\[
\phi(u_{t})-\phi(u_{s})=\int_{0}^{1}\phi'(u_{s}+\theta u_{s,t})u_{s,t}d\theta\ \ \ (u_{s,t}\triangleq u_{t}-u_{s})
\]
and similarly for $\phi(v_{t})-\phi(v_{s})$, it is easily checked
that 
\begin{equation}
J(u)-J(v)=\int_{0}^{1}\int_{0}^{t}\frac{(A_{s,t}+B_{s,t})(A_{s,t}+B_{s,t}+2C_{s,t})}{(t-s)^{2-2H}}dsdt,\label{eq:JDif}
\end{equation}
where 
\begin{align*}
A_{s,t} & \triangleq\big(\int_{0}^{1}\phi'(u_{s}+\theta u_{s,t})d\theta\big)(u_{s,t}-v_{s,t}),\\
B_{s,t} & \triangleq\big(\int_{0}^{1}\phi'(u_{s}+\theta u_{s,t})d\theta-\int_{0}^{1}\phi'(v_{s}+\theta v_{s,t})d\theta\big)v_{s,t},\\
C_{s,t} & \triangleq\big(\int_{0}^{1}\phi'(v_{s}+\theta v_{s,t})d\theta\big)v_{s,t}
\end{align*}
respectively. We now expand the product inside the integral in (\ref{eq:JDif})
and estimate each term separately.

\textit{The $AA$-term}. One has 
\begin{align*}
J_{AA} & \triangleq\int_{0}^{1}\int_{0}^{t}\frac{A_{s,t}^{2}}{(t-s)^{2-2H}}dsdt\leqslant\|\phi'\|_{\infty}^{2}\int_{0}^{1}\int_{0}^{t}\frac{|u_{s,t}-v_{s,t}|^{2}}{(t-s)^{2-2H}}dsdt\\
 & \leqslant\|\phi'\|_{\infty}^{2}\|u-v\|_{H-\delta}^{2}\int_{0}^{1}\int_{0}^{t}(t-s)^{4H-2-2\delta}dsdt\\
 & =C_{H,\delta}\|\phi'\|_{\infty}^{2}\|u-v\|_{H-\delta}^{2},
\end{align*}
provided that $\delta$ is chosen such that $H>\frac{1+2\delta}{4}$.

\textit{The $AB$- and $AC$-term}s. Here we make use of the assumption
that $\|u-v\|_{H-\delta}\leqslant1$. In particular, 
\[
|u_{s,t}-v_{s,t}|\leqslant\|u-v\|_{H-\delta}|t-s|^{H-\delta}\leqslant1\ \ \ \forall s,t\in[0,1]
\]
and thus $A_{s,t}$ is uniformly bounded. Since 
\[
(\phi(u_{t})-\phi(u_{s}))-(\phi(v_{t})-\phi(v_{s}))=A_{s,t}+B_{s,t},
\]
it follows that $B_{s,t}$ is also uniformly bounded (say, by $C_{1}$).
Let $\sigma$ be a fixed number such that 
\begin{equation}
\max\big\{\frac{1+\delta-3H}{\alpha},0\big\}<\sigma<1.\label{eq:GammaAB}
\end{equation}
Then one has
\begin{align}
J_{AB} & \triangleq\int_{0}^{1}\int_{0}^{t}\frac{|A_{s,t}|\cdot|B_{s,t}|}{(t-s)^{2-2H}}dsdt\nonumber \\
 & \leqslant\int_{0}^{1}\int_{0}^{t}\frac{\|\phi'\|_{\infty}|u_{s,t}-v_{s,t}|\cdot C_{1}^{1-\sigma}|B_{s,t}|^{\sigma}}{(t-s)^{2-2H}}dsdt\nonumber \\
 & \leqslant C_{1}^{1-\sigma}\int_{0}^{1}\int_{0}^{t}\frac{\|\phi'\|_{\infty}|u_{s,t}-v_{s,t}|\cdot(2\|\phi'\|_{\infty})^{\sigma}|v_{s,t}|^{\sigma}}{(t-s)^{2-2H}}dsdt\nonumber \\
 & \leqslant C_{2}\|u-v\|_{H-\delta}\|v\|_{\alpha}^{\sigma}\int_{0}^{1}\int_{0}^{t}(t-s)^{H-\delta+\sigma\alpha+2H-2}dsdt\nonumber \\
 & =C_{3}\|u-v\|_{H-\delta}\|v\|_{\alpha}^{\sigma},\label{eq:JAB}
\end{align}
where the last integral is convergent due to the constraint (\ref{eq:GammaAB})
on $\sigma$. The estimate of the $AC$-term is the same as (\ref{eq:JAB}).

\textit{The $BB$- and $BC$-terms}. By the definitions of $B_{s,t}$
and $C_{s,t}$, one has 
\begin{align*}
\big|B_{s,t}\big| & \leqslant C\|\phi''\|_{\infty}\|u-v\|_{H-\delta}\|v\|_{\alpha}|t-s|^{\alpha},\\
\big|C_{s,t}\big| & \leqslant C\|\phi'\|_{\infty}\|v\|_{\alpha}|t-s|^{\alpha}.
\end{align*}
We again observe that $B_{s,t}$ and $C_{s,t}$ are both uniformly
bounded under the assumption $\|u-v\|_{H-\delta}\leqslant1$. Let
$\beta$ be a fixed number such that 
\begin{equation}
\frac{1-2H}{2\alpha}<\beta<1.\label{eq:CtyEstBeta}
\end{equation}
In a similar way leading to (\ref{eq:JAB}), one has 
\begin{align*}
J_{BB}+J_{BC} & \triangleq\int_{0}^{1}\int_{0}^{t}\frac{|B_{s,t}|\cdot(|B_{s,t}|+|C_{s,t}|)}{(t-s)^{2-2H}}dsdt\\
 & \leqslant C_{1}\|u-v\|_{H-\delta}^{\beta}\|v\|_{\alpha}^{2\beta}\int_{0}^{1}\int_{0}^{t}(t-s)^{2\beta\alpha+2H-2}dsdt\\
 & =C_{2}\|u-v\|_{H-\delta}^{\beta}\|v\|_{\alpha}^{2\beta},
\end{align*}
where the finiteness of the last integral follows from the constraint
(\ref{eq:CtyEstBeta}) on $\beta.$

By putting the above results together, one obtains the desired continuity
estimate (\ref{eq:CtyEstJ}).
\end{proof}
As an application of Lemma \ref{lem:CtyEstJ}, the next result enables
one to localise the tail event for $J(X)$ on a tubular neighbourhood
of $\lambda h_{\alpha}$.
\begin{lem}
\label{lem:Local}Let $\delta,\varepsilon,\sigma,\beta$ be given
parameters such that $\delta\in(0,2H-1/2)$, $\varepsilon\in(0,1)$
and 
\begin{equation}
\max\big\{\frac{1+\delta-3H}{\alpha},0\big\}<\sigma<\frac{1-2H}{\alpha},\ \frac{1-2H}{2\alpha}<\beta<\frac{1-2H}{(2-\varepsilon)\alpha}.\label{eq:GamBet}
\end{equation}
Then there exists a positive constant $C$ independent of $\lambda$,
such that 
\begin{equation}
\big\{\|X-\lambda h_{\alpha}\|_{H-\delta}\leqslant\lambda^{-\varepsilon}\big\}\subseteq\big\{ J(X)>C\lambda^{\frac{1-2H}{\alpha}}\big\}\label{eq:Local}
\end{equation}
for all large $\lambda$.
\end{lem}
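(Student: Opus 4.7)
The plan is to apply the continuity estimate of Lemma \ref{lem:CtyEstJ} to the pair $u=X$ and $v=\lambda h_\alpha$, and combine it with the growth estimate (\ref{eq:JGrowthSim}) to absorb the perturbation. Fix $\omega$ in the event $\{\|X-\lambda h_\alpha\|_{H-\delta}\leqslant\lambda^{-\varepsilon}\}$. For $\lambda$ large enough, $\lambda^{-\varepsilon}\leqslant 1$, so the hypothesis of Lemma \ref{lem:CtyEstJ} is met, and the constraints $\delta<2H-1/2$, $\sigma>\max\{(1+\delta-3H)/\alpha,0\}$, $\beta>(1-2H)/(2\alpha)$ in (\ref{eq:GamBet}) are exactly those required to invoke (\ref{eq:CtyEstJ}).

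Applying (\ref{eq:CtyEstJ}) and using $\|\lambda h_\alpha\|_\alpha=\lambda\|h_\alpha\|_\alpha$ (finite by Lemma \ref{lem:WeiHol}), one obtains
\[
|J(X)-J(\lambda h_\alpha)|\leqslant C_1\bigl(\lambda^{-2\varepsilon}+\lambda^{\sigma-\varepsilon}+\lambda^{\beta(2-\varepsilon)}\bigr)
\]
for some constant $C_1$ independent of $\lambda$. On the other hand, (\ref{eq:JGrowthSim}) gives
\[
J(\lambda h_\alpha)\geqslant C_0\lambda^{(1-2H)/\alpha}
\]
for all $\lambda>1$ (after absorbing $\rho$ as in the paragraph preceding the statement). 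Combining the two yields
\[
J(X)\geqslant C_0\lambda^{(1-2H)/\alpha}-C_1\bigl(\lambda^{-2\varepsilon}+\lambda^{\sigma-\varepsilon}+\lambda^{\beta(2-\varepsilon)}\bigr).
\]

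The remaining task is to verify that each error term is of strictly smaller order than $\lambda^{(1-2H)/\alpha}$. The first term $\lambda^{-2\varepsilon}$ is bounded, hence trivially negligible. The upper bound $\sigma<(1-2H)/\alpha$ in (\ref{eq:GamBet}) gives $\sigma-\varepsilon<(1-2H)/\alpha$, handling the second term. The upper bound $\beta<(1-2H)/((2-\varepsilon)\alpha)$ in (\ref{eq:GamBet}) gives $\beta(2-\varepsilon)<(1-2H)/\alpha$, handling the third. Therefore, for all sufficiently large $\lambda$, the error is at most $(C_0/2)\lambda^{(1-2H)/\alpha}$, and we conclude $J(X)\geqslant(C_0/2)\lambda^{(1-2H)/\alpha}$, which gives the inclusion (\ref{eq:Local}) with $C=C_0/2$.

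There is no substantial obstacle: the lemma is a mechanical combination of the two main ingredients already developed in Lemma \ref{lem:JGrowth} and Lemma \ref{lem:CtyEstJ}. The entire content of the exponent constraints (\ref{eq:GamBet}) is precisely calibrated so that a perturbation of $\lambda h_\alpha$ by something of $(H-\delta)$-H\"older size $\lambda^{-\varepsilon}$ does not destroy the growth rate $\lambda^{(1-2H)/\alpha}$. The only mild care needed is to match the requirement $\|u-v\|_{H-\delta}\leqslant 1$ in Lemma \ref{lem:CtyEstJ} with the localisation radius, which holds automatically for large $\lambda$ since $\varepsilon>0$.
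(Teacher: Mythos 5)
Your proof is correct and follows essentially the same route as the paper: apply Lemma \ref{lem:CtyEstJ} with $u=X$, $v=\lambda h_{\alpha}$, use the growth bound (\ref{eq:JGrowthSim}), and check via the exponent constraints (\ref{eq:GamBet}) that the three error terms are of strictly lower order than $\lambda^{\frac{1-2H}{\alpha}}$. The only cosmetic difference is that you keep the factor $\lambda^{-\varepsilon}$ in the middle term (giving $\lambda^{\sigma-\varepsilon}$) where the paper simply bounds $\|X-\lambda h_{\alpha}\|_{H-\delta}\leqslant 1$ (giving $\lambda^{\sigma}$); both are handled by the same constraint $\sigma<\frac{1-2H}{\alpha}$.
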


\begin{proof}
We apply Lemma \ref{lem:CtyEstJ} to the case when $u=X,v=\lambda h_{\alpha}$.
Let $\delta,\sigma,\beta$ be parameters satisfying the constraint
(\ref{eq:DGBCons}). Suppose that $\|X-\lambda h_{\alpha}\|_{H-\delta}\leqslant\lambda^{-\varepsilon}$
(in particular, $\leqslant1$). It follows from (\ref{eq:JGrowthSim})
and (\ref{eq:CtyEstJ}) that
\begin{align}
J(X) & \geqslant J(\lambda h_{\alpha})-\big|J(X)-J(\lambda h_{\alpha})\big|\nonumber \\
 & \geqslant C_{1}\lambda^{\frac{1-2H}{\alpha}}-C_{2}\big(1+\lambda^{\sigma}\|h_{\alpha}\|^{\sigma}+\lambda^{-\varepsilon\beta}\lambda^{2\beta}\|h_{\alpha}\|_{\alpha}^{2\beta}\big),\label{eq:JLower1}
\end{align}
To ensure that (\ref{eq:JLower1}) is bounded from below by $C_{3}\lambda^{\frac{1-2H}{\alpha}},$
one only needs to further impose that 
\[
\sigma<\frac{1-2H}{\alpha},\ (2-\varepsilon)\beta<\frac{1-2H}{\alpha},
\]
which leads to the constraint (\ref{eq:GamBet}) (in combination with
(\ref{eq:DGBCons})). The relation (\ref{eq:Local}) thus follows.
\end{proof}
In order to prove the lower estimate (\ref{eq:IXLower}) for $J(X)$,
we also need the following small ball inequality for fBM under H\"older norm
(cf. \cite[Theorem 2.2]{KLS95}).
\begin{lem}
\label{lem:SmallBall}Given any $\delta\in(0,H)$, there exists a
positive constant $C_{\delta}>0$ such that 
\[
\mathbb{P}\big(\|X\|_{H-\delta}\leqslant x^{\delta}\big)\geqslant e^{-C_{\delta}/x}\ \ \ \forall x\in(0,1].
\]
\end{lem}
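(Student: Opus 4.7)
This is a Kuelbs--Li--Shao small-ball lower bound for fBM in Hölder norm; upon setting $\varepsilon = x^{\delta}$ it is equivalent to
\[
-\log \mathbb{P}\bigl(\|X\|_{H-\delta} \leqslant \varepsilon\bigr) \leqslant C_{\delta}\, \varepsilon^{-1/\delta}.
\]
The plan is to prove this in the standard two-step fashion: first reduce the Gaussian small-ball probability to a metric entropy estimate via the Kuelbs--Li inequality, and then estimate the metric entropy of the Cameron--Martin unit ball of fBM under the $(H-\delta)$-Hölder norm.

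The first step is to invoke the Kuelbs--Li lower bound on Gaussian small balls: for any centred Gaussian measure $\mu$ on a separable Banach space $(E,\|\cdot\|_{E})$ with Cameron--Martin unit ball $K$, one has the implication
\[
\mu\bigl(\|x\|_{E} \leqslant \varepsilon\bigr) \geqslant e^{-r^{2}/2} \ \Longrightarrow\ -\log \mu\bigl(\|x\|_{E} \leqslant 2\varepsilon\bigr) \leqslant \log N\bigl(K, \varepsilon; \|\cdot\|_{E}\bigr) + \tfrac{1}{2}r^{2},
\]
whose proof combines the Cameron--Martin shift formula with Borell's isoperimetric inequality through a shift-and-cover argument. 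A short iteration of this implication starting from a crude initial small-ball estimate reduces the desired inequality to the entropy bound
\[
\log N\bigl(K_{H}, \eta; \|\cdot\|_{H-\delta}\bigr) \leqslant C_{\delta}\, \eta^{-1/\delta}
\]
for $K_{H}$ the Cameron--Martin unit ball of fBM.

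The second step is to derive this entropy bound from the structure of the Cameron--Martin space. The fractional Sobolev representation \eqref{eq:fSobRep} identifies $\mathcal{H}$ with (essentially) a fractional Sobolev space of regularity $H+1/2$, and a one-dimensional Sobolev-type embedding then places $K_{H}$ inside a fixed $\mathcal{C}^{H}$-ball on $[0,1]$. The desired entropy estimate then follows from the classical Kolmogorov--Tikhomirov bound for the embedding of a $\mathcal{C}^{H}$-ball into the weaker Hölder space $\mathcal{C}^{H-\delta}$, whose rate is governed by the ``regularity gap'' $\delta$.

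The main technical burden lies in the entropy estimate of the second step: one must carefully identify the Cameron--Martin space of fBM as a fractional Sobolev space of the correct order, upgrade the associated Sobolev embedding into $\mathcal{C}^{H}$ with the correct dependence of the embedding constant, and invoke the Kolmogorov--Tikhomirov entropy estimate in its sharp form to produce exactly the exponent $1/\delta$. Once this is in place, the Kuelbs--Li duality of Step 1 is a routine Gaussian-measure argument.
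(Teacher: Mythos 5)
First, note that the paper does not prove this lemma at all: it is quoted verbatim from Kuelbs--Li--Shao \cite{KLS95} (Theorem 2.2 there), so the only available comparison is with that cited result and the standard Gaussian small-ball machinery you invoke. Your route through entropy duality is legitimate in principle, but as written it contains a quantitative error that breaks the argument. The shift-and-cover inequality of Kuelbs--Li requires covering the \emph{dilated} Cameron--Martin ball $rK$ with $r\sim\sqrt{2\phi(\varepsilon)}$, $\phi(\varepsilon)\triangleq-\log\mu(\|x\|_E\leqslant\varepsilon)$; i.e.\ the entropy entering the bound is $N(K,\varepsilon/r)$, not $N(K,\varepsilon)$ as in your displayed implication. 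Correcting this, the bootstrap reads $\phi(2\varepsilon)\lesssim\varepsilon^{-a}\phi(\varepsilon)^{a/2}+\phi(\varepsilon)^{a/2}$ when $\log N(K,\eta)\lesssim\eta^{-a}$, and the iteration closes only if $a<2$, in which case it stabilises at $\phi(\varepsilon)\lesssim\varepsilon^{-2a/(2-a)}$ (this direction of the entropy/small-ball transfer is due to Li--Linde). Hence to obtain the required $\phi(\varepsilon)\lesssim\varepsilon^{-1/\delta}$ you need the entropy bound $\log N\big(K,\eta;\|\cdot\|_{H-\delta}\big)\lesssim\eta^{-2/(1+2\delta)}$, \emph{not} $\eta^{-1/\delta}$ as you state.

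This is not a cosmetic issue, because your Step 2 can only ever produce the exponent $1/\delta$: factoring the Cameron--Martin ball through a fixed $\mathcal{C}^{H}$-ball and then applying Kolmogorov--Tikhomirov for $\mathcal{C}^{H}\hookrightarrow\mathcal{C}^{H-\delta}$ throws away the extra $1/2$ of smoothness carried in the $L^{2}$-scale, and since $\delta<H<1/2$ one has $1/\delta>2$, so this entropy bound is useless in the (corrected) Kuelbs--Li iteration --- formally $2a/(2-a)$ is negative, and the bootstrap diverges. The repair is to estimate the entropy of the embedding of the intrinsic Cameron--Martin unit ball, which has Sobolev regularity $H+1/2$ on the $L^{2}$-scale (note that (\ref{eq:fSobRep}) describes the non-intrinsic space ${\cal H}\cong W^{1/2-H,2}$ of integrands, not the path space $\bar{\cal H}$ whose entropy you need), directly into $\mathcal{C}^{H-\delta}$: the entropy numbers of $W^{H+1/2,2}([0,1])\hookrightarrow\mathcal{C}^{H-\delta}$ decay like $k^{-(\delta+1/2)}$, giving $\log N(\eta)\asymp\eta^{-2/(1+2\delta)}$ with exponent strictly below $2$, and then the transfer yields exactly $\phi(\varepsilon)\lesssim\varepsilon^{-1/\delta}$, i.e.\ the lemma. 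Alternatively, one can simply follow \cite{KLS95}, whose proof for stationary-increment Gaussian processes under H\"older norms is direct and avoids entropy duality altogether.
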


We are now in a position to prove Proposition \ref{prop:IXLower}.

\begin{proof}[Proof of Proposition \ref{prop:IXLower}]

Let $\delta,\varepsilon,\sigma,\beta$ be given parameters satisfying
the constraints in Lemma \ref{lem:Local}. According to the relation
(\ref{eq:Local}), 
\[
\mathbb{P}\big(J(X)>C\lambda^{\frac{1-2H}{\alpha}}\big)\geqslant\mathbb{P}\big(\|X-\lambda h_{\alpha}\|_{H-\delta}\leqslant\lambda^{-\varepsilon}\big)
\]
for all large $\lambda.$ On the other hand, since $h_{\alpha}\in\bar{{\cal H}}$,
by the Cameron-Martin theorem one has 
\begin{align*}
\mathbb{P}\big(\|X-\lambda h_{\alpha}\|_{H-\delta}\leqslant\lambda^{-\varepsilon}\big) & =\mathbb{E}\big[\exp\big(\lambda{\cal I}_{1}(l_{\alpha})-\frac{1}{2}\lambda^{2}\|l_{\alpha}\|_{{\cal H}}^{2}\big);\|X\|_{H-\delta}\leqslant\lambda^{-\varepsilon}\big]\\
 & \geqslant e^{-\lambda^{2}\|l_{\alpha}\|_{{\cal H}}^{2}/2}\mathbb{P}\big(\|X\|_{H-\delta}\leqslant\lambda^{-\varepsilon},{\cal I}_{1}(l_{\alpha})>0\big),
\end{align*}
where $l_{\alpha}\in{\cal H}$ is the element corresponding to the
intrinsic Cameron-Martin path $h_{\alpha}.$ Since 
\[
(\|X\|_{H-\delta},{\cal I}_{1}(l_{\alpha}))\stackrel{\text{law}}{=}(\|X\|_{H-\delta},-{\cal I}_{1}(l_{\alpha})),
\]
one has 
\begin{align*}
\mathbb{P}\big(\|X\|_{H-\delta}\leqslant\lambda^{-\varepsilon},{\cal I}_{1}(l_{\alpha})>0\big) & =\mathbb{P}\big(\|X\|_{H-\delta}\leqslant\lambda^{-\varepsilon},{\cal I}_{1}(l_{\alpha})<0\big).\\
 & =\frac{1}{2}\mathbb{P}\big(\|X\|_{H-\delta}\leqslant\lambda^{-\varepsilon}\big).
\end{align*}
In addition, according to Lemma \ref{lem:SmallBall} with $x=\lambda^{-\varepsilon/\delta},$
\[
\mathbb{P}\big(\|X\|_{H-\delta}\leqslant\lambda^{-\varepsilon}\big)\geqslant e^{-C_{\delta}\lambda^{\varepsilon/\delta}}\ \ \ \forall\lambda\geqslant1.
\]
As a consequence, one finds that 
\[
\mathbb{P}\big(\|X-\lambda h_{\alpha}\|_{H-\delta}\leqslant\lambda^{-\varepsilon}\big)\geqslant\frac{1}{2}e^{-\lambda^{2}\|l_{\alpha}\|_{{\cal H}}^{2}/2}e^{-C_{\delta}\lambda^{\varepsilon/\delta}}.
\]

Now we choose $\varepsilon$ to be such that $\varepsilon/\delta<2$.
It follows that
\[
\mathbb{P}\big(J(X)>C\lambda^{\frac{1-2H}{\alpha}}\big)\geqslant C_{1}e^{-C_{2}\lambda^{2}}
\]
for all large $\lambda.$ By renaming $C\lambda^{\frac{1-2H}{\alpha}}$
as $\lambda$, one arrives at the tail estimate
\[
\mathbb{P}\big(J(X)>\lambda\big)\geqslant C_{1}e^{-C_{3}\lambda^{\frac{2\alpha}{1-2H}}},
\]
which then leads to (\ref{eq:IXLower}) in view of Remark \ref{rem:IJEquiv}.

\end{proof}

\subsection{\label{subsec:Step3}Lower tail estimate of the rough line integral}

We now complete the last step of proving Theorem \ref{thm:main}.
The point is to see how the tail estimate of $I(X)$ in Proposition
\ref{prop:IXLower} translates to a corresponding tail estimate of
the rough integral $\int_{0}^{1}\phi(X_{t})dY_{t}$.

\begin{proof}[Proof of Theorem \ref{thm:main}]

We begin by conditioning on $X$:
\[
\mathbb{P}\big(\big|\int_{0}^{1}\phi(X_{t})dY_{t}\big|>\lambda\big)=\mathbb{E}\big[\mathbb{P}\big(\big|\int_{0}^{1}\phi(X_{t})dY_{t}\big|>\lambda\big|X\big)\big].
\]
It is seen in Section \ref{subsec:FracRepCV} that conditional on
$X,$ the rough integral $\int_{0}^{1}\phi(X_{t})dY_{t}$ is Gaussian
with mean zero and variance $I(X)$. As a result, one has
\[
\mathbb{E}\big[\mathbb{P}\big(\big|\int_{0}^{1}\phi(X_{t})dY_{t}\big|>\lambda\big|X\big)\big]=\mathbb{E}^{X}\big[\mathbb{P}^{Z}\big(|Z|>\frac{\lambda}{\sqrt{I(X)}}\big)\big]\geqslant C_{1}\mathbb{E}^{X}\big[e^{-C_{2}\frac{\lambda^{2}}{I(X)}}\big].
\]
Here $Z$ is a standard Gaussian random variable that is independent
of $X$ and $\mathbb{P}^{Z}$ denotes the probability with respect
to the randomness of $Z$. To reach the last inequality, we have used
the following simple estimate for the Gaussian density:
\begin{equation}
\mathbb{P}(|Z|>r)\geqslant C_{1}e^{-C_{2}r^{2}}\ \ \ \forall r>0.\label{eq:GauFctLower}
\end{equation}
\textcolor{black}{By further conditioning on $\{I(X)>r\}$, one has
\begin{align}
\mathbb{E}^{X}\big[e^{-C_{2}\frac{\lambda^{2}}{I(X)}}\big] & \geqslant e^{-C_{2}\frac{\lambda^{2}}{r}}\mathbb{P}^{X}(I(X)>r)\geqslant e^{-C_{2}\frac{\lambda^{2}}{r}}e^{-C_{4}r^{\frac{2\alpha}{1-2H}}},\label{eq:TailIntegralSub}
\end{align}
where the last inequality follows from Proposition \ref{prop:IXLower}.}

\textcolor{black}{To proceed further, let us define 
\[
f(r)\triangleq C_{2}\frac{\lambda^{2}}{r}+C_{4}r^{\frac{2\alpha}{1-2H}},\ \ \ r>0.
\]
Simple calculation shows that $f(r)$ is decreasing on $(0,r_{*}]$
and is increasing on $[r_{*},\infty)$, where 
\[
r_{*}\triangleq\big(\frac{C_{2}(1-2H)\lambda^{2}}{2\alpha C_{4}}\big)^{\frac{1-2H}{1+2\alpha-2H}}.
\]
Substituting this $r_{*}$ for $r$ in (\ref{eq:TailIntegralSub})
yields
\begin{equation}
{\color{black}\mathbb{E}^{X}\big[e^{-C_{2}\frac{\lambda^{2}}{I(X)}}\big]}{\color{black}\geqslant C_{5}e^{-C_{6}\lambda^{\frac{4\alpha}{1+2\alpha-2H}}}.}\label{eq:S3Pf}
\end{equation}
}Note that 
\[
\alpha>H+\frac{1}{2}\iff\frac{4\alpha}{1+2\alpha-2H}>1+2H.
\]
Given $\gamma>1+2H,$ by taking $\alpha>H+1/2$ to be such that 
\[
\gamma=\frac{4\alpha}{1-2H+2\alpha},
\]
the desired lower tail estimate (\ref{eq:MainEst}) follows immediately
from (\ref{eq:S3Pf}).

The proof of Theorem \ref{thm:main} is now complete.

\end{proof}

\section{Further questions}

As we mentioned in the introduction, the CLL upper estimate (\ref{eq:CLLIntro})
indeed holds with $\gamma=1+2H$. However, our lower estimate (\ref{eq:MainEst})
for the rough integral $\int_{0}^{1}\phi(X_{t})dY_{t}$ only holds
with a Weibull exponent $\gamma$ arbitrarily close to $1+2H$. It
is not clear whether one could achieve the critical exponent $\gamma=1+2H$
for the lower estimate under the current methodology. The main issue
is that we do not know if $h_{H+1/2}$ belongs to $\bar{{\cal H}}$
(cf. (\ref{eq:Weierstrass}) for the definition of $h_{H+1/2}$).

On the other hand, the current analysis relies on the decoupling between
$X$ and $Y$ in a crucial way in order to use conditional Gaussianity.
It is tempting to ask how the current result could be extended to
the more general SDE setting (e.g. still driven by fBM). In the rough
regime of $H\in(1/4,1/2)$, we conjecture that the lack of Gaussian
tail is a ``generic'' phenomenon for elliptic, non-commutative,
$C_{b}^{\infty}$-vector fields.

Last but not the least, in the setting of Theorem \ref{thm:main},
it is easily seen that any non-constant, periodic, $C_{b}^{\infty}$-function
$\phi$ will satisfy the condition (\ref{eq:NonDegPhi}). On the other
hand, it is trivial that the rough integral $\int_{0}^{1}\phi(X_{t})dY_{t}$
has Gaussian tail if $\phi=\text{const. }$ In other words, in the
periodic setting with a fixed driving fBM, there is a clear alternative
of either having Gaussian tail or the CLL Weibull tail (but no other
possibilities!). It would be interesting to see, at least for periodic
$C_{b}^{\infty}$-vector fields, whether such an alternative phenomenon
will continue to take place in the SDE context.


\begin{thebibliography}{BNOT16}
\bibitem[BJ15]{BJ15}X. Bardina and M. Jolis. Multiple fractional
integral with Hurst parameter less then $1/2$. \textit{Stochastic
Process. Appl.} \textbf{116} (2015): 463--479.

\bibitem[BNOT16]{BNOT16}F. Baudoin, E. Nualart, C. Ouyang and S.
Tindel. On probability laws of solutions to differential systems driven
by a fractional Brownian motion. \textit{Ann. Probab.} \textbf{44}
(4) (2016): 2554--2590.

\bibitem[BOT14]{BOT14}F. Baudoin, C. Ouyang and S. Tindel. Upper
bounds for the density of solutions to stochastic differential equations
driven by fractional Brownian motions. \textit{Ann. Inst. Henri Poincaré
Probab. Stat.} \textbf{50} (1) (2014): 111--135.

\bibitem[BG15]{BG15}H. Boedihardjo and X. Geng. The uniqueness of
signature problem in the non-Markov setting. \textit{Stochastic Process.
Appl.} \textbf{125} (12) (2015): 4674--4701.

\bibitem[CLL13]{CLL13}T. Cass, C. Litterer and T. Lyons. Integrability
and tail estimates for Gaussian rough differential equations. \textit{Ann.
Probab.} \textbf{41} (4) (2013): 3026--3050.

\bibitem[CQ02]{CQ02}L. Coutin and Z. Qian. Stochastic analysis, rough
path analysis and fractional Brownian motions. \textit{Probab. Theory
Relat. Fields} \textbf{122} (2002): 108--140.

\bibitem[DPV06]{DPV06}E. Di Nezza, G. Palatucci and E. Valdinoci.
Hitchhiker's guide to the fractional Sobolev spaces. \textit{Bulletin
des sciences math}\textit{\'e}\textit{matiques.} \textbf{136} (5)
(2012): 521--573.

\bibitem[DVZ05]{DVZ05}K. Dzhaparidze and H. Van Zanten. Krein\textquoteright s
spectral theory and the Paley--Wiener expansion for fractional Brownian
motion. \textit{Ann. Probab.} \textbf{33} (2) (2005): 620--644.

\bibitem[FV06]{FV06}P.K. Friz and N.B. Victoir. A variational embedding
theorem and applications. \textit{J. Funct. Anal.} \textbf{239} (2006):
631--637.

\bibitem[FV10]{FV10}P.K. Friz and N.B. Victoir. \textit{Multidimensional
stochastic processes as rough paths: theory and applications}. Cambridge
University Press, 2010.

\bibitem[FR14]{RiedelFullConvergenceRate}P.K. Friz and S. Riedel.
Convergence rates for the full Gaussian rough paths. \emph{Annales
de l'IHP Probabilités et statistiques,} 50(1) (154-194), 2014.

\bibitem[GOT20]{GOT20}X. Geng, C. Ouyang and S. Tindel. Precise local
estimates for differential equations driven by fractional Brownian
motion: elliptic case. \textit{Arxiv preprint}, 2020.

\bibitem[KLS95]{KLS95}J. Kuelbs, W.V. Li and Q. Shao. Small ball
probabilities for Gaussian processes with stationary increments under
H\"older norms. J. Theoret. Probab. \textbf{8} (2) (1995): 361--386.

\bibitem[Kuo75]{Kuo75}H. Kuo. \textit{Gaussian measures in Banach
spaces}. Lecture Notes in Mathematics, Vol 463. Springer-Verlag, 1975.

\bibitem[KS85]{KS85}S. Kusuoka and D. Stroock. Applications of the
Malliavin calculus, Part II. \textit{J. Fac. Sci. Univ. Tokyo} \textbf{32}
(1985): 1--76.

\bibitem[KS87]{KS87}S. Kusuoka and D. Stroock. Applications of the
Malliavin calculus, Part III. \textit{J. Fac. Sci. Univ. Tokyo} \textbf{34}
(1987): 391--442.

\bibitem[Lyo94]{Lyo94}T.J. Lyons. Differential equations driven by
rough signals. I. An extension of an inequality of L. C. Young. \textit{Math.
Res. Lett.} \textbf{1} (4) (1994): 451--464.

\bibitem[Lyo98]{Lyo98}T.J. Lyons. Differential equations driven by
rough signals. \textit{Rev. Mat. Iberoamericana} \textbf{14} (2) (1998):
215--310.

\bibitem[Nua06]{Nua06}D. Nualart. \textit{The Malliavin calculus
and related topics}. Springer, 2006.

\bibitem[Rie17]{Rie17}S. Riedel. Transportation-cost inequalities
for diffusions driven by Gaussian processes. \textit{Electron. J.
Probab.} \textbf{22} (24) (2017): 1--26.

\bibitem[RX13]{RiedelXu}S. Riedel and W. Xu. A simple proof of distance
bounds for Gaussian rough paths. Electronic Journal of Probability,
18 (2013): 1-18.
\end{thebibliography}
\end{document}